\theoremstyle{plain}
\newtheorem{lem}{Lemma}
\newtheorem{prop}{Proposition}
\newtheorem{thm}{Theorem}
\newtheorem*{lem*}{Lemma}
\newtheorem*{prop*}{Proposition}
\newtheorem*{thm*}{Theorem}
\newtheorem*{cor*}{Corollary}
\newtheorem*{conj*}{Conjecture}
 \theoremstyle{remark}
 \newtheorem*{rmk}{Remark}
\newtheorem*{eg}{Example}
\newtheorem*{example}{Example}
\newtheorem*{rmks}{Remarks}
\newtheorem*{Acknowledgements*}{Acknowledgements}
\theoremstyle{definition}
\DeclareMathOperator{\rad}{rad} 
\DeclareMathOperator{\GL}{GL}
\DeclareMathOperator{\Rep}{Rep}
\newcommand{\bil}[2]{\langle #1, #2 \rangle}
\newcommand{\Z}{{\mathbb{Z}}}
\newcommand{\Ext}{\operatorname{Ext}}
\newcommand{\Hom}{\operatorname{Hom}}
\newcommand{\SL}{\mathrm{SL}}
\newcommand{\F}{\mathfrak{F}}
\newcommand{\St}{\mathrm{St}}
\newcommand{\ch}{\operatorname{ch}}
\newcommand{\pr}{\operatorname{pr}}
\newcommand{\R}{\mathbb{R}}
\newcommand{\soc}{\operatorname{soc}}
\newcommand{\Top}{\operatorname{head}}
\renewcommand{\labelitemi}{$\cdot$}
\begin{document}
\title[Decomposition of Tensor Products]{Decomposition of Tensor
  Products of Modular\\Irreducible Representations for $\text{SL}_3$:
  the $p\ge 5$ case} 
\author{C.~Bowman}
\address{Department of Mathematics, City University London, 
  Northampton Square, London EC1V 0HB, England}
\email{chris.bowman.2@city.ac.uk}
  \author{S.R.~Doty} 
\address{Mathematics and Statistics, Loyola
University Chicago, Chicago, IL 60660, USA}
\email{doty@math.luc.edu} 
\author{S.~Martin}
\address{DPMMS, Centre for Mathematical Sciences, University of Cambridge, 
  Wilberforce Road, Cambridge CB3 0WB, UK}
\email{S.Martin@dpmms.cam.ac.uk}
\subjclass[2000]{20C30} \date{\today}
\begin{abstract}
  We study the structure of the indecomposable direct summands of
  tensor products of two \emph{restricted} rational simple modules for
  the algebraic group $\SL_3(K)$, where $K$ is an algebraically closed
  field of characteristic $p \ge 5$.  We also give a
  characteristic-free algorithm for the decomposition of such a tensor
  product into indecomposable direct summands.  The $p<5$ case was
  studied in the authors' earlier paper \cite{BDM1}. We find that for
  characteristics $p\ge 5$ all the indecomposable summands are rigid,
  in contrast to the characteristic 3 case.
\end{abstract}

\maketitle

\section*{Introduction}\noindent
Let $G=\SL_3(K)$ where $K$ is an algebraically closed field of
characteristic $p\ge 5$.  The purpose of this paper, which is a
continuation of \cite{BDM1}, is to describe the family $\F$ of
indecomposable direct summands of a tensor product $L \otimes L'$ of
two simple $G$-modules $L, L'$ of $p$-restricted highest weights.
(All modules considered are rational.) We give a characteristic-free
algorithm for the computation of the decomposition multiplicities of
such a tensor product into indecomposable modules and give structural
information about the indecomposable summands.  Thanks to Steinberg's
tensor product theorem, such information provides a first
approximation toward a description of the indecomposable direct
summands of a general tensor product of two (not necessarily
restricted) simple $G$-modules.  Similar questions were previously
studied for characteristic $p<5$ in \cite{BDM1}, where sharper results
were obtained.  The current paper and its prequel \cite{BDM1} were
motivated by \cite{DH}, which studied the $\SL_2(K)$ case.
 
Our main results are summarized in Theorems \ref{thm:1} and
\ref{thm:2} in Section \ref{sec:thms}. The aforementioned algorithm is
given in \ref{algorithm:1} and \ref{algorithm:2}.  In contrast to what
happens in characteristic $p=3$, in characteristics $p\ge 5$ we find
that all the indecomposable summands are rigid modules (socle series
and radical series coincide). All of the indecomposable summands are
in fact tilting modules, except for certain non-tilting simple modules
and a certain family of non-highest weight modules, which had also
been observed in the $p=3$ case.  The first examples of non-rigid
tilting modules for algebraic groups were exhibited in \cite{BDM1};
further examples and a general positive rigidity result for tilting
modules are now available in \cite{and}.

We had hoped that determining the indecomposable summands of $L
\otimes L'$ in the restricted case would lead to their determination
in general, by some sort of generalised tensor product result; e.g.,
see Lemma 1.1 in \cite{BDM1}. However, our results show this is not
the case, and the general (unrestricted) decomposition problem remains
open. Although our results do in principle give a partial
decomposition in the unrestricted case, using formula (1.1.3) of
\cite{BDM1}, the summands there will not always be indecomposable, and
the problem of finding all splittings of those summands remains
in general unsolved.

\begin{Acknowledgements*}
We thank the referee for his/her careful reading and very helpful comments on this paper.
  The authors also thank S.~Donkin for valuable advice in the preparation
  of this article, and O.~Solberg for advice on using the QPA package
  \cite{QPA}. 
  The first author is grateful for the financial support
  received from grant ANR-10-BLAN-0110.
\end{Acknowledgements*}

\section{Preliminaries}\label{sec:prelim}\noindent
We adopt the notational conventions of \cite{BDM1}. Throughout this
paper $G = \SL_3(K)$ where $K$ is an algebraically closed field of
characteristic $p \ge 5$. This means that $p \ge 2h-2$ where $h=3$ is
the Coxeter number of the underlying root system, so general results
on algebraic groups that are known to hold only for $p \ge 2h-2$ are
available.

\subsection{}\textbf{Weight notations.} \label{ss:8:notation}
Sometimes we need to use both $\GL_3$ and $\SL_3$ weight notation for
calculations.  Any $\GL_3$-module is an $\SL_3$-module by
restriction. A given $\SL_3$-module $M$ may be lifted to a
$\GL_3$-module in many ways, but all such lifts differ by a power of
the determinant representation, and which lift we choose makes no
difference for our results.  It is sometimes convenient to work with
$\GL_3$-modules in order to apply, for example, the
Littlewood--Richardson rule. We adopt the notation of II.1.21 of
\cite{Jantzen}. In particular, $\{ \varepsilon_1, \varepsilon_2,
\varepsilon_3 \}$ is the standard basis of $X(T_{\GL_3}) \simeq \Z^3$
where $T_{\GL_3}$ is the diagonal torus in $\GL_3$. We identify a
$\GL_3$-weight $\chi = a_1 \varepsilon_1 + a_2 \varepsilon_2 + a_3
\varepsilon_3$ with the 3-tuple $((a_1, a_2, a_3)) \in \Z^3$. The
inclusion $\SL_3 \subset \GL_3$ induces an embedding $T_{\SL_3}
\subset T_{\GL_3}$ of their diagonal subgroups, which in turn induces
a surjection $X(T_{\GL_3}) \twoheadrightarrow X(T_{\SL_3})$ given by
restricting characters from $T_{\GL_3}$ to $T_{\SL_3}$, with kernel
generated by $\varepsilon_1+\varepsilon_2+\varepsilon_3$. This map
sends $a_1 \varepsilon_1 + a_2 \varepsilon_2 + a_3 \varepsilon_3$ onto
$(a_1-a_2)\varpi_1 + (a_2-a_3)\varpi_2$, where $\varpi_1, \varpi_2$
are the fundamental weights in $X(T_{\SL_3})$. We shall identify an
$\SL_3$-weight $\lambda_1 \varpi_1 + \lambda_2 \varpi_2$ in
$X(T_{\SL_3}) \simeq \Z^2$ with the ordered pair $(\lambda_1,
\lambda_2)$. In terms of these identifications, the map $X(T_{\GL_3})
\twoheadrightarrow X(T_{\SL_3})$ is given by
\[
  ((a_1,a_2, a_3)) \to (a_1-a_2, a_2-a_3). 
\]
We use double brackets $((\ ,\ ,\ ))$ versus single brackets $(\ ,\ )$
in order to easily distinguish between $\GL_3$ and $\SL_3$-weights.
Given an element $\chi$ in $X(T_{\GL_3})$ we shall denote its image
under the restriction map $X(T_{\GL_3}) \twoheadrightarrow
X(T_{\SL_3})$ by $\overline{\chi}$. In particular, we shall need the
$\SL_3$-weights $\overline{\varepsilon_1} = (1,0)$,
$\overline{\varepsilon_2} = (-1,1)$, and $\overline{\varepsilon_3} =
(0,-1)$ coming from the $\GL_3$-weights $\varepsilon_1=((1,0,0))$,
$\varepsilon_2=((0,1,0))$, and $\varepsilon_3=((0,0,1))$ forming the
standard basis of $X(T_{\GL_3})$. As usual, $\rho = \varpi_1+\varpi_2
= (1,1)$ is one-half the sum of the positive roots for $\SL_3$. 

\subsection{}
Until further notice we use only $\SL_3$ notation for weights. Thus
$X^+ = \{(a,b) \colon a,b \ge 0\}$ the set of dominant weights. The
simple roots are $\alpha_1 = (2,-1)$, $\alpha_2 = (-1,2)$.  As in
\cite{BDM1}, $T(\lambda)$ is the indecomposable tilting module of
highest weight $\lambda$, $\Delta(\lambda)$ the Weyl module of highest
weight $\lambda$, $\nabla(\lambda)$ its contravariant dual, and
$L(\lambda)$ the simple head of $\Delta(\lambda)$. We also denote the
Steinberg module $\Delta(p-1,p-1) = L(p-1,p-1)$ by $\St$.  In case
$\nabla(\lambda) = L(\lambda)$ is simple, we have $T(\lambda) =
\Delta(\lambda) = \nabla(\lambda) = L(\lambda)$; this applies in
particular to the Steinberg module $\St$, the natural module
$E=\Delta(1,0)$ and its linear dual $E^\ast = \Delta(0,1)$. The
tilting modules $T(\lambda)$ are always contravariantly self-dual, and
a tensor product of two tilting modules is again tilting.

By $\mathcal{F}(\Delta)$ we mean the category of $G$-modules admitting
a $\Delta$-filtration, and, dually, by $\mathcal{F}(\nabla)$ we mean
the category of $G$-modules admitting a $\nabla$-filtration.  We note
that $T(\lambda)$ is the unique indecomposable module of highest
weight $\lambda$ in $\mathcal{F}(\Delta) \cap \mathcal{F}(\nabla)$.

\subsection{} 
It is useful to regard a given $G$-module as a module for some
generalised Schur algebra $S(\pi)$, where $\pi$ is an appropriate
finite saturated poset of dominant weights; see \cite{don1} or Chapter
II.A in \cite{Jantzen} for details on generalised Schur algebras.  The
fact that $S(\pi)$ is quasi-hereditary is used repeatedly. For any
$S(\pi)$ we let $P_\pi(\lambda)$ denote the projective cover of
$L(\lambda)$ in the category of $S(\pi)$-modules. We may drop the
subscript $\pi$ in case the set $\pi$ is fixed by the context. The
contravariant dual of $P_\pi(\lambda)$ is isomorphic to the injective
hull of $L(\lambda)$ in the category of $S(\pi)$-modules.

It is known that $P(\mu) \in \mathcal{F}(\Delta)$.  Let $[P(\mu)
  \colon \Delta(\lambda)]$ denote the number of subquotients
isomorphic to $\Delta(\lambda)$ in a $\Delta$-filtration of $P(\mu)$;
this number is known to be independent of the choice of
$\Delta$-filtration. For any finite dimensional $S(\pi)$-module $M$
let $[M \colon L(\mu)]$ be the multiplicity of $L(\mu)$ in a
composition series of $M$.  See Proposition A2.2(iv) of \cite{qschur}
or Theorem 2.6 of \cite{Donkin:filt} for the following basic
reciprocity property, which will be used repeatedly.

\begin{prop}\label{prop:BH-reciprocity}
  Let $S(\pi)$ be the generalised Schur algebra determined by a finite
  saturated set $\pi \subset X^+$. Then $[P(\mu) \colon
    \Delta(\lambda)] = [\nabla(\lambda) \colon L(\mu)]$ for all
  $\lambda, \mu \in \pi$.
\end{prop}

This is sometimes called \emph{Brauer--Humphreys reciprocity}.  The
Schur algebra setting also allows us to make use of the following
refinement of Proposition~\ref{prop:BH-reciprocity} from \cite{BM}. Let
$\rad_i P(\mu)$ be the $i$th radical layer of $P(\mu)$.

\begin{prop}\label{BGG}
  Let $S(\pi)$ be a generalised Schur algebra, where $\pi \subset X^+$
  is a finite saturated set. Then for any $\lambda, \mu \in \pi$ we
  have:
  \begin{align*}
  [ \rad_iP(\mu): L(\lambda)] &=[\rad_iP(\lambda):L(\mu)].
  \intertext{This reciprocity respects the $\Delta$-filtration of the
    projective modules:} [ \rad_iP(\mu): \Top\Delta(\lambda)]&=[
  \rad_i\Delta(\lambda):L(\mu)].
  \end{align*}
\end{prop}

By $[\rad_iP(\mu): \Top\Delta(\lambda)]$ we mean the number of
successive quotients $\Delta(\lambda_j)$ in a given \emph{fixed}
$\Delta$-filtration of $P(\mu)$ such that $\lambda_j = \lambda$ and
there is a surjection $\rad^i P(\mu)\to \Delta(\lambda)$ which
carries the subquotient $\Delta(\lambda_j)$ onto $\Delta(\lambda)$.
It is easily checked that this is independent of the choice of
$\Delta$-filtration.  See \ref{BGGeg} for an example.

\begin{rmk}
  The contravariant dual of the above theorem relates the socle layers
  of injective modules, and gives information about where
  $\nabla$-modules occur in a $\nabla$-filtration of an injective
  module. This will also be used where needed.
\end{rmk}

We will need the following basic result from the theory of
quasi-hereditary algebras. This follows for instance from Proposition
A2.2 of \cite{qschur}.

\begin{prop}\label{donkin}  For any $\lambda, \mu \in X^+$, we have:

  {\rm(a)} If $\Ext^1(\Delta(\lambda),\Delta(\mu)) \neq 0$ then $\mu >
  \lambda$.

  {\rm(b)} $\dim_K \Hom_G(M,N) = \sum_{\nu\in X^+} [M: \Delta(\nu)][N:
  \nabla(\nu)]$, for any $M \in \mathcal{F}(\Delta)$, $N \in
  \mathcal{F}(\nabla)$.
\end{prop}

\subsection{}\label{Jantzen-iso}
Let $X_1 = \{(a,b) \colon 0 \le a,b \le p-1 \}$ be the set of
restricted weights. Let $w_0$ be the longest element in the Weyl group
$W$. By Jantzen \cite{jan} we have for any $\lambda \in X_1$ that
\begin{equation}\label{Jantz}
  T(2(p-1)\rho + w_0\lambda) \cong P_\pi(\lambda),
\end{equation}
an isomorphism of $S(\pi)$-modules, where $\pi = \{\lambda \in X^+
\colon \lambda \le 2(p-1)\rho + w_0\lambda) \}$. 

This fact will be used repeatedly in the proof of our results. Any
projective tilting module for $S(\pi)$ is also injective, since
tilting modules are contravariantly self-dual, so the above module is
projective, injective, and tilting, for any $\lambda \in X_1$.

For $p\geq 5$, there is a twisted tensor product theorem for tilting
modules, due to Donkin \cite[(2.1)
  Proposition]{Donkin:Zeit}.   Every $\lambda \in X^+$ has a
unique expression in the form
\begin{equation}  
  \lambda = \textstyle\sum_{j=0}^m a_j(\lambda) \, p^j
\end{equation}
with $a_0(\lambda), \dots, a_{m-1}(\lambda) \in (p-1)\rho + X_1$ and
$\langle a_m(\lambda), \alpha^\vee \rangle < p-1$ for at least one
simple root $\alpha$.  Given $\lambda \in X^+$, express $\lambda$ in
the form above. There is an isomorphism of $G$-modules
\begin{equation}\label{eq:tptilt}
  T(\lambda) \simeq \textstyle \bigotimes_{j=0}^m T(a_j(\lambda))^{[j]}.
\end{equation}

\section{Facets and alcoves for $G =\SL_3$}\label{sec:facets}\noindent
We now introduce a labelling scheme for keeping track of the various
alcoves and facets needed in our calculations.

\subsection{}\label{ss:W}
The Euclidean space associated to the root system of $G$ is $X
\otimes_\Z \R \cong \R^2$. The Weyl group $W = \langle s_1,
s_2\rangle$ is isomorphic to the symmetric group on 3 letters. Here
$s_1, s_2$ are reflections in lines orthogonal to the simple roots
$\alpha_1, \alpha_2$ respectively.

\subsection{}\label{ss:symmetry}
The map $V \mapsto V^*$, where $V^*$ is the linear dual of $V$, is an
involution on the set of $G$-modules. If $V$ is a highest weight
module of highest weight $\lambda$, the highest weight of $V^*$ is
$-w_0(\lambda)$, so this involution on $G$-modules induces a
corresponding involution $\lambda \mapsto -w_0(\lambda)$ on the set
$X^+$, where $w_0 = s_1 s_2 s_1$ is the longest element of $W$.  We
refer to this involution as \emph{symmetry}, and we generally will
omit stating results that can be obtained `by symmetry' from results
already stated.

\subsection{} \label{ss:conventions}   
Let $\rho = \alpha_1 + \alpha_2 = (1,1)$.  Recall that the dot action
of $W_p$ on $X$ is defined by the rule $w \cdot \lambda =
w(\lambda+\rho) - \rho$.
The bottom alcove $C_1$ is defined by
\begin{align*}
  C_1 = \{ v \in \R^2 : 0 < \langle v+\rho, \alpha^\vee \rangle < p
  \text{ for all positive roots } \alpha\}.
\end{align*}
As depicted in Figure \ref{fig:1}, $C_1$ is the interior of an
equilateral triangle in $\R^2$ with one vertex at the point $-\rho$.
The affine Weyl group $W_p$ is generated by the reflections in the
walls of $C_1$.

Any translate $w \cdot C_1$ of $C_1$ under the dot action
of $W_p$ is called an alcove.  The closure of an alcove $C_i$ will be
denoted by $\overline{C}_i$.  In Figure \ref{fig:1} below we number
the alcoves, which we call \emph{fundamental} alcoves, that arise in
our study. Alcoves $i$ and $i'$ are in symmetry according to the
involution of \ref{ss:symmetry}.
\begin{figure}[h]
\begin{center}
\begin{tikzpicture}[scale=4.0]
  \path (0,0) coordinate (origin);
  \path (60:7mm) coordinate (A1);
  \path (60:14mm) coordinate (A2);
  \path (60:21mm) coordinate (A3);
  \path (60:28mm) coordinate (A4);

  \path (120:7mm) coordinate (B1);
  \path (120:14mm) coordinate (B2);
  \path (120:21mm) coordinate (B3);
  \path (120:28mm) coordinate (B4);

  \path (A1) ++(120:21mm) coordinate (C1);
  \path (A2) ++(120:14mm) coordinate (C2);
  \path (A3) ++(120:7mm) coordinate (C3);

  \clip (-14mm,-2mm) rectangle (60:28.1mm);
  \foreach \i in {1,...,27}
  {
    \path (origin)++(60:\i mm)  coordinate (a\i);
    \path (origin)++(120:\i mm)  coordinate (b\i);
    \path (a\i)++(120:60mm) coordinate (ca\i);
    \path (b\i)++(60:60mm) coordinate (cb\i);
    \draw[very thin,gray] (a\i) -- (ca\i) (b\i) -- (cb\i);
  }
  \draw[thick] (origin) -- (A4) (origin) -- (B4) (A4) -- (B4)
        (A1) -- (C1) (A2) -- (C2) (A3) -- (C3) 
        (B1) -- (C3) (B2) -- (C2) (B3) -- (C1) 
        (B1) -- (A1) (B2) -- (A2) (B3) -- (A3);
  \path (a13)++(120:13mm) coordinate (P);
  \path (60:20.5mm)++(120:-0.2cm) coordinate (QA);
  \path (120:20.5mm)++(60:-0.2cm) coordinate (QB);
  \draw[very thick,gray,densely dashed] (P) -- (QA) (P) -- (QB);
  \draw[fill] (origin)++(60:1mm)++(120:1mm) circle (0.3pt)
         node[align=left, right]{\small$o$};
  \draw[fill] (P) circle (0.3pt)
         node[align=left, above] {};

  \draw (origin) ++(90:4.2mm) node {\Large$\mathbf{1}$};
  \draw (origin) ++(90:8.4mm) node {\Large$\mathbf{2}$};
  \draw (A1) ++(90:4.2mm) node {\Large$\mathbf{3}$};
  \draw (A1) ++(90:8.4mm) node {\Large$\mathbf{4}$};
  \draw (B1) ++(90:4.2mm) node {\Large$\mathbf{3}'$};
  \draw (B1) ++(90:8.4mm) node {\Large$\mathbf{4}'$};
  \draw (A2) ++(90:4.2mm) node {\Large$\mathbf{6}$};
  \draw (A2) ++(90:8.4mm) node {\Large$\mathbf{8}$};
  \draw (B2) ++(90:4.2mm) node {\Large$\mathbf{6}'$};
  \draw (B2) ++(90:8.4mm) node {\Large$\mathbf{8}'$};
  \draw (A1) ++(90:16.2mm) node {\Large$\mathbf{9}$};
  \draw (B1) ++(90:16.2mm) node {\Large$\mathbf{9}'$};
  \draw (origin) ++(90:16.2mm) node {\Large$\mathbf{5}$};
  \draw (origin) ++(90:20mm) node {\Large$\mathbf{7}$};
  \draw (origin) ++(90:-0.3mm)  node {$-\rho$};

  \draw (origin)++(60:1mm)+(120:1mm) coordinate (AA);
  \draw (A1)++(120:1mm) coordinate (BB);
  \draw (A1)++(120:7mm) coordinate (CC);
  \draw (B1)++(60:1mm) coordinate (DD);
  \filldraw[green,nearly transparent] (AA) -- (BB) -- (CC) -- (DD) -- (AA);

\end{tikzpicture}
\end{center}
\caption{Labelled alcoves for $p=7$.  The southernmost vertex is
  $-\rho$. The origin $o = (0,0)$ is the marked point directly above
  it in alcove 1. The unlabelled marked point in alcove 7 is
  $2(p-1)\rho$. The region of dominant weights $\lambda$ with $\lambda
  \le 2(p-1)\rho$ is bounded above by the two dashed lines emanating
  from that point.  The labelled alcoves, and walls between them,
  contain all the points in the region. The coloured region of points
  is the restricted region $X_1$.  The simple root $\alpha_1$
  (respectively $\alpha_2$) is orthogonal to the wall of $C_1$
  pointing northeast (respectively, northwest) from $-\rho$. }
\label{fig:1}
\end{figure}
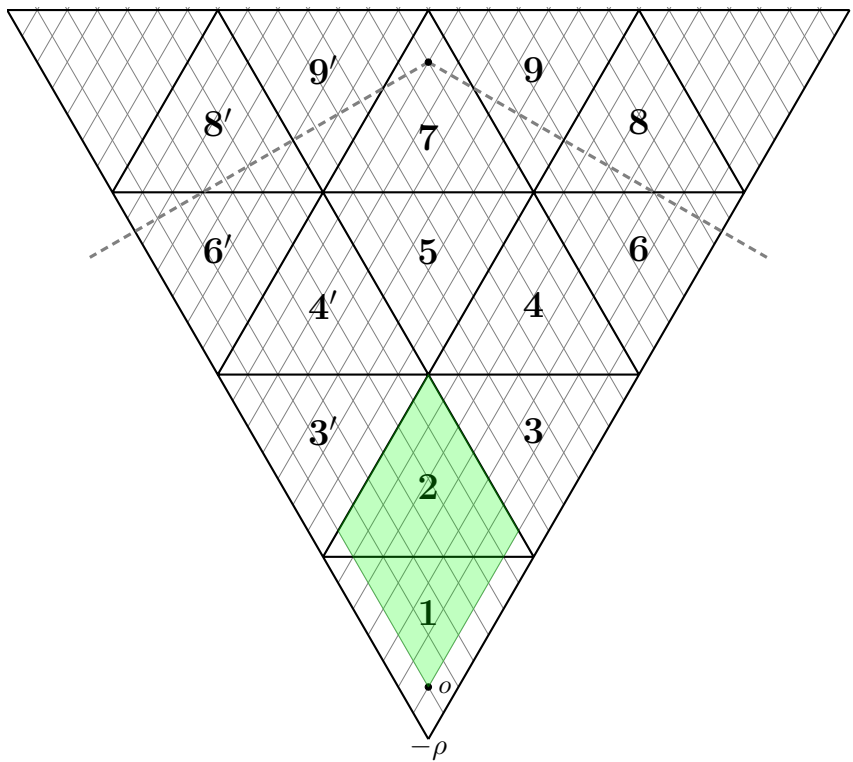
We let $\mathcal{F}_{i|j}= \overline{C}_i\cap \overline{C}_j$
denote the wall between any pair $C_i, C_j$ of adjacent alcoves; this
wall is a facet in the sense of \cite[II.6.2]{Jantzen}. The reflection
in the wall $\mathcal{F}_{i|j}$ will be denoted by $s_{i|j}$. In
this notation the generators of $W_p$ are $s_{1|2}$ along with the
elements $s_1$, $s_2$ defined in \ref{ss:W}.

\subsection{}
The points at intersections of very light grid lines in Figure
\ref{fig:1} are weights. The region of highest weights of composition
factors that can occur in a $p$-restricted tensor product $L \otimes
L'$ is the set of all dominant weights $\lambda$ such that $\lambda
\le (2p-2)\rho$; this is the set of weights on or below the dashed
lines in Figure \ref{fig:1}. The alcoves in question are the numbered
ones in the figure.

In case $p<7$, not all of the labelled alcoves in Figure \ref{fig:1}
actually appear in restricted tensor product decompositions.  The
degeneracies for $p<7$ are caused by fewer points appearing in each
alcove; to understand this the reader is encouraged to draw the
analogue of Figure \ref{fig:1} for the smaller primes, after which the
degeneracies are apparent.

\section{Main Results}\label{sec:thms}\noindent
In this section we give the two main results of the paper.  The first
main result describes the members of the family $\F= \F(G)$ of
indecomposable direct summands of a tensor product of two restricted
simple $G$-modules. The second main result is a description of the
structure of the modules in $\F$, as far as we can deduce structural
information by current methodology. The highest weight modules in $\F$
are either simple modules $L(\lambda)$ or indecomposable tilting
modules $T(\lambda)$ for various $\lambda$, however there are non
highest weight modules $M(\lambda)$ which occur in $\F$.

\subsection{Notational conventions.}\label{ss:notcon}
We assume the reader is familiar with Jantzen's translation principle
\cite{Jantzen}, which in particular implies an equivalence of module
structure for highest weight modules of the form $L(\lambda)$,
$\Delta(\lambda)$, $\nabla(\lambda)$, or $T(\lambda)$ belonging to the
same facet. We will therefore adopt facet notation for highest weight
modules whenever convenient.  This replaces a highest weight
$\lambda \in X^+$ by its corresponding alcove label $j$ whenever
$\lambda \in C_j$. Thus, for example, given $\lambda \in C_1 \cap
X$, $L(\lambda)$ is denoted by $L(1)$, $\Delta(\lambda)$ is denoted by
$\Delta(1)$, $T(\lambda)$ is denoted by $T(1)$, and so on.
Furthermore, within module diagrams, for each label $j$ the simple
module $L(j)$ will be identified with its alcove label $j$.  For
$p$-singular weights $\lambda \in \mathcal{F}_{i|j}$ lying on the wall
common to two alcoves $i$ and $j$ (but not a vertex of any alcove) we
use the notation $i|j$ to denote the facet, and use the notation
$L(i|j)$, $\Delta(i|j)$, $T(i|j)$ for the highest weight modules
$L(\lambda)$, $\Delta(\lambda)$, $T(\lambda)$ respectively. Similar to
the above, within module diagrams, for each facet label $i|j$ the
simple module $L(i|j)$ will be identified with its facet label $i|j$.

\subsection{}\label{ss:M}
Corresponding to each weight $\lambda \in C_2$, there is a unique
indecomposable module $M(\lambda)$ in $\mathfrak{F}$ which is
\emph{not} generated by a single highest weight vector. For $\lambda
\in C_2$, the module $M(\lambda)$ has a simple socle and head
isomorphic to $L(\lambda)$, with the quotient $\rad M(\lambda)/ \soc
M(\lambda)$ of the radical by the socle a semisimple module isomorphic
to $L(s_{2|3'} \cdot \lambda) \oplus L(s_{2|3} \cdot \lambda) \oplus
L(s_{1|2} \cdot \lambda)$. The module $M(\lambda)$ may be constructed
as a submodule of the tilting module $T(s_{3|4} \cdot s_{2|3} \cdot
\lambda)$ or, symmetrically, as a submodule of the tilting module
$T(s_{3'|4'} \cdot s_{2|3'} \cdot \lambda)$. We construct it in
section \ref{M} as a quotient of an appropriate generalised Schur
algebra.  Similar modules (with identical structure diagrams) already
appeared in \cite{BDM1} for $p=3$.

Making use of our convention (from \ref{ss:notcon}) of replacing
highest weights by their alcove or facet labels we often write $M(2)$
instead of $M(\lambda)$ for $\lambda \in C_2$.  In this notation,
$M(2)$ is isomorphic to a submodule of $T(4)$ or $T(4')$. The Alperin
diagram of $M(2)$ is
\[ \scriptstyle M(2) = 
\begin{minipage}{30mm}
\def\objectstyle{\scriptstyle}\xymatrix@=6pt{
&		&2\ar@{-}[dr]\ar@{-}[dl]\ar@{-}[d]		&\\
&3\ar@{-}[dr]		&1\ar@{-}[d]		&3'\ar@{-}[dl]\\
&		&2		&		&			\\	
}
\end{minipage}
\]
and this is a strong Alperin diagram. (Recall from \cite{BDM1} that a
diagram is said to be strong if it determines the socle as well as
radical series.)  Although $M(2)$ is not a highest weight module, it
still has simple head isomorphic to $L(2)$, so this notation should
not cause confusion. The module $M(2)$ is rigid and self-dual under
contravariant duality.

Let $\F=\F(\SL_3)$ be the family of isomorphism classes of
indecomposable direct summands of a $p$-restricted tensor product $L
\otimes L'$, where $L, L'$ are $p$-restricted simple modules. We
studied this family in \cite{BDM1} for $p=2,3$. For $p=2$ we found
\cite[Prop.~3.2]{BDM1} that $\F$ is precisely the set
\begin{equation*}
\F_R = \{ T(\lambda): 0 \le \bil{\lambda+\rho}{\alpha^\vee} \le 2p-2,
\text{ for all simple roots $\alpha$} \}.
\end{equation*}
For $p=3$, we found \cite[Prop.~4.2]{BDM1} that $\F_R \subset \F$, but
there are also a few other modules in $\F$, which we called
`exceptional'. It turns out that $\F_R$ is contained in $\F$ for every
characteristic $p$. We call the members of $\F_R$ \emph{regular} and
all other members of $\F$ \emph{exceptional}.  In the following, we
have incorporated part of the results of \cite{BDM1} in order to
summarize our results for all $p$.

\renewcommand{\thethm}{\Alph{thm}}
\begin{thm} \label{thm:1} 
Let the characteristic $p>0$ of $K$ be arbitrary. Let $\F=\F(\SL_3)$
be the family of isomorphism classes of indecomposable direct summands
of a $p$-restricted tensor product $L \otimes L'$, where $L, L'$ are
$p$-restricted simple modules. Then $\F$ consists of the following
indecomposable modules:
\begin{enumerate}[label={\rm(\alph*)},leftmargin=*,itemsep=0.5em]

\item The set $\F_R$ of regular tilting modules.

\item The exceptional modules $L(\lambda)$ and $M(\lambda)$, for each
  $\lambda \in C_2$.

\item A finite list, depending on $p$, of exceptional tilting
  modules of the form $T(\lambda)$, for various $\lambda$ not already
  listed in part (a). For $p=2$ there are no exceptional tilting
  modules; for $p=3$ there are precisely four (see \cite{BDM1}) of
  highest weight lying on the boundary of $\overline{C}_6$; for
  larger $p$ the number of exceptional modules grows with $p$ with the
  highest weight of such modules lying in the region $\overline{C}_6
  \cup \overline{C}_8 \cup \overline{C}_9$ (and those obtained by
  symmetry).
\end{enumerate}
\end{thm}

\begin{rmks}
1. One can explicitly determine the decomposition of a tensor
product of two $p$-restricted irreducible modules, by an algorithm
described in Section \ref{decomp}.

2.  We include results of \cite{BDM1} for $p=2, 3$ in the theorem, for
completeness. In case $p=2$ the alcove $C_2$ is empty, so part (b) of
the theorem is vacuous, and thus for $p=2$ the members of $\F$ are
just the tilting modules listed in part (a). 
\end{rmks}

For $p \ge 3$ there are three vertices (points common to the closure
of six alcoves) in the admissible region of weights defined in Figure
\ref{fig:1}, and each of them gives the highest weight of a (simple)
tilting member of $\F$. These are the Steinberg module $\St =
T((p-1)\rho) = L((p-1)\rho)$ and the two modules $T(p\varpi_1 +
(p-1)\rho) = L(p\varpi_1 + (p-1)\rho) \cong E^{[1]} \otimes \St$,
$T(p\varpi_2 + (p-1)\rho) = L(p\varpi_2 + (p-1)\rho) \cong (E^*)^{[1]}
\otimes \St$.

Our second main result describes the structure of the other tilting
members of $\F$, of highest weight lying in alcoves or on walls
between a pair of alcoves, assuming $p \ge 5$.

\begin{thm}   \label{thm:2}
Assume that the characteristic of $K$ is $p \ge 5$.
\begin{list}{\labelitemi}{\leftmargin=1em}
\item[(a)] The $p$-singular tilting modules in $\F=\F(\SL_3)$ of
  highest weight lying on walls are all rigid, with structure as
  follows.  The uniserial modules of highest weight lying on walls
  are:
\begin{align*}
T(1|2)=[(1|2)] ; \ \ & \ \
 T(2|3)=[(2|3)]; \\
T(3|4)=[(2|3'),(3|4),(2|3')]; \ \ & \ \
T(4|6)=[(1|2),(4|6),(1|2)]
\end{align*}
along with their symmetric versions. 

The non-uniserial modules of highest weight lying on walls are
$T(4|5)$, $T(8|9)$, $T(6|8)$, $T(5|7)$ and $T(7|9)$, with
structure given by the following strong Alperin diagrams,
respectively:
\begin{align*}
 \begin{minipage}{60mm}
\def\objectstyle{\scriptstyle}
\xymatrix@=6pt{
					&(2|3)	\ar@{-}[d]	&	\\
					&(3'|4')	\ar@{-}[dr]\ar@{-}[dl]	&	\\
(4|5)			&			&(2|3)		\\
					&(3'|4')	\ar@{-}[ul]\ar@{-}[ur]	\ar@{-}[d]	&		\\
					&(2|3)			&		  }
 \end{minipage} \quad
  \begin{minipage}{60mm}
\def\objectstyle{\scriptstyle}
\xymatrix@=6pt{
						&(4|6)	\ar@{-}[dr]\ar@{-}[dl]		&	\\
(5|7)	\ar@{-}[drr]\ar@{-}[dr]\ar@{-}[d]	&					&(1|2)\ar@{-}[d]\ar@{-}[dll]\\
(4|6)					&(8|9)				&(4'|6')		\\
(5|7)	\ar@{-}[urr]\ar@{-}[ur]\ar@{-}[u]	&					&(1|2)\ar@{-}[u]\ar@{-}[ull]\\
			&(4|6)	\ar@{-}[ur]\ar@{-}[ul]			&		  }
 \end{minipage} \quad
  \begin{minipage}{60mm}
\def\objectstyle{\scriptstyle}
\xymatrix@=6pt{
						& (4|5)\ar@{-}[dr] 			& 								&(2|3)					\\
(6|8) \ar@{-}[dr] \ar@{-}[ur] & 						& (3'|4')  \ar@{-}[dr] \ar@{-}[ur] 					&		 			\\
 			& (4|5)  	\ar@{-}[ur] 				& 								& (2|3)   
  }
 \end{minipage}
\end{align*}

\begin{align*}
\def\objectstyle{\scriptstyle}
\xymatrix@=6pt{
		&	 								& (1|2)	  \ar@{-}[dl] 	 \ar@{-}[dr] 	& 					&\\
		&(4'|6') \ar@{-}[dl] 	 \ar@{-}[dr] 			&		    					& (4|6) \ar@{-}[dr] \ar@{-}[dl] 		& \\
		(1|2)\ar@{-}[dr] 	\ar@{-}[drrr] &									& (5|7)  \ar@{-}[dr] \ar@{-}[dl]	& 	& (1|2) \ar@{-}[dl] \ar@{-}[dlll] 	\\
		&(4'|6')							& 				  			&(4|6) \\
		&									& ( 1,2) \ar@{-}[ur] \ar@{-}[ul] 		&
}\quad
\def\objectstyle{\scriptstyle}
\xymatrix@=6pt{
		&	 								& (4|5)	  \ar@{-}[dl] 	 \ar@{-}[dr] 	& 					&\\
		&(3'|4') \ar@{-}[dl] 	 \ar@{-}[dr] 			&		    					& (6|8) \ar@{-}[dr] \ar@{-}[dl] 		& \\
		(2|3)\ar@{-}[dr] 	&									& (7|9)  \ar@{-}[dr] \ar@{-}[dl]	& 	& (4|5) \ar@{-}[dl] 	\\
		&(3'|4')							& 				  			&(6|8) \\
		&									& ( 4,5) \ar@{-}[ur] \ar@{-}[ul] 		&
}
\end{align*}
along with their symmetric versions.

\item[(b)] The $p$-regular tilting modules in $\F$ are all rigid. 
The uniserial ones have the following structure:
$
T(1)=[1]; \quad T(2)=[1,2,1].
$

The non-uniserial ones for which the structure can be completely
worked out are $T(3)$, $T(4)$, and $T(6)$ with the following strong
Alperin diagrams, respectively:
\[
\begin{minipage}{34mm}
\def\objectstyle{\scriptstyle}\xymatrix@=6pt{
&	&2  \ar@{-}[dl]   \ar@{-}[dr]	&	\\
&3	&	&1	\\
&	&2  \ar@{-}[ul]  \ar@{-}[ur]	}
 \end{minipage}\quad 
  \begin{minipage}{34mm}
  \def\objectstyle{\scriptstyle}\xymatrix@=6pt{
&&	 					& 2 \ar@{-}[dr] \ar@{-}[dl] 			& 			&\\
&&3\ar@{-}[dl]   	& 1\ar@{-}[u] \ar@{-}[d]  \ar@{-}[dll] 	  \ar@{-}[drr] 	 					&3'   \ar@{-}[dr]	& \\
& 2  &			& 4  \ar@{-}[dr] \ar@{-}[dl] \ar@{-}[ur] \ar@{-}[ul] \ar@{-}[u]		& 	& 2\\
&&3'  \ar@{-}[ul]  			& 1\ar@{-}[ull]  \ar@{-}[u]  \ar@{-}[d]  \ar@{-}[urr]  	&3\ar@{-}[ur] \\
&&			& 2 \ar@{-}[ur] \ar@{-}[ul] 		&\\
 } \end{minipage}  \quad 
 \begin{minipage}{34mm}
\def\objectstyle{\scriptstyle}\xymatrix@=6pt{
&				& 											&1 	\ar@{-}[dl]\ar@{-}[dr]				 	  \\
  &				& 4\ar@{-}[dr] 	\ar@{-}[dr]\ar@{-}[d]\ar@{-}[dl]\ar@{-}[drr]		&   									&2 \ar@{-}[dll]\ar@{-}[d]\ar@{-}[dr]		  \\
&1\ar@{-}[dr]  	&3 \ar@{-}[d]								&6 \ar@{-}[dr]					&3'\ar@{-}[d]						&1\ar@{-}[dl] 	 	 \\
  &		&2 \ar@{-}[dr]\ar@{-}[urr]						& 								&4	\ar@{-}[ull]\ar@{-}[dl]			 \\
   &		& 								&1 								& 					 
}
 \end{minipage}
\]
along with their symmetric versions.  In the larger cases we give only
the Loewy structure of the tilting modules.  We highlight the Weyl
filtrations below for $T(5)$ and $T(7)$, respectively:
\begin{align*} 
&\begin{minipage}{60mm}
\def\objectstyle{\scriptstyle}\xymatrix@=6pt{	
&		&						&							&2\ar@{-}[d]	\\
&		&3\ar@{-}[dl]				&							&1				&								&3'\ar@{-}[dr]		&	\\
&2								&							&4\ar@{-}[d]\ar@{-}[dl]\ar@{-}[dll]						&				&4'\ar@{-}[d]\ar@{-}[dr]\ar@{-}[drr]		&		&2	\\
&3		&1						&3'							&5\ar@{-}[dr]\ar@{-}[dl]	&3								&1		&3'		\\
&2\ar@{-}[u]\ar@{-}[urr]\ar@{-}[ur]	&	&4'	\ar@{-}[dr]\ar@{-}[dl]						&				&4	\ar@{-}[dr]\ar@{-}[dl]		&		&2\ar@{-}[u]\ar@{-}[ul]\ar@{-}[ull]		&		\\
&		&3	\ar@{-}[drr]\ar@{-}[urrr]&							&1\ar@{-}[d]		&								&3'\ar@{-}[dll]\ar@{-}[ulll]		&		\\		
&		&						&							&2				&								&		&\\
}\end{minipage}
&\begin{minipage}{90mm}
\def\objectstyle{\scriptstyle}
\xymatrix@=6pt{	
&			&			&							&					&			&1						&						&							&			&			\\
&			&			&4\ar@{-}[d]\ar@{-}[dl]\ar@{-}[dr]	&					&			&2\ar@{-}[dl]			&					&						&4'\ar@{-}[dr]\ar@{-}[d]\ar@{-}[dl]\ar@{-}[dr] 	&			&			\\
&6\ar@{-}[d]	&3	&1				&3'		&1	&						&5\ar@{-}[d]\ar@{-}[dr]&3			&1						&3'		&6'	\ar@{-}[d]		\\
&4\ar@{-}[d]&	2\ar@{-}[u]\ar@{-}[urr]\ar@{-}[ur]	&&7\ar@{-}[dr]\ar@{-}[d]\ar@{-}[drr]\ar@{-}[dl]\ar@{-}[dll] &			&&4' \ar@{-}[dr]\ar@{-}[d]\ar@{-}[drr]&					4\ar@{-}[dr]\ar@{-}[d]\ar@{-}[dl]&		&2\ar@{-}[u]\ar@{-}[ul]\ar@{-}[ull]		&			4'\ar@{-}[d]\\
&1			&6			&3			&5			&3'		&6'		&3		&1		&3'		&		&1		&\\
& 			&			&4\ar@{-}[u]\ar@{-}[urr]\ar@{-}[ur]\ar@{-}[ul]			&			&4'	\ar@{-}[u]\ar@{-}[ul]\ar@{-}[ur]	&		&		&2	\ar@{-}[u]\ar@{-}[ul]\ar@{-}[ur]	&		&		&		&\\
& 			&			&			&1	 \ar@{-}[ul]\ar@{-}[ur]		&		&		&		&		&		&		&		&\\
&\\
}
\end{minipage} 
\end{align*}
and below we provide the Weyl filtrations of $T(9)$ and $T(8)$, respectively:
\begin{align*}
&\begin{minipage}{80mm}
\def\objectstyle{\scriptstyle}\xymatrix@=6pt{	
&&&		&						&							&4\ar@{-}[d]\ar@{-}[dr]\ar@{-}[dl]	\\
&&&6\ar@{-}[dr]		& 				&3							&1				&3'								&	&5	\ar@{-}[dr]\ar@{-}[drrr]	&	\\
&&7	\ar@{-}[dll]\ar@{-}[d]\ar@{-}[dl]\ar@{-}[dr]\ar@{-}[drr]	&						&4\ar@{-}[dr]							&				&2\ar@{-}[ur]\ar@{-}[ul]\ar@{-}[u]				&&8\ar@{-}[dl]\ar@{-}[drl]\ar@{-}[drlr]				&&4\ar@{-}[d]\ar@{-}[drr]&&4'\ar@{-}[d]\ar@{-}[dl]\ar@{-}[dll]	\\
6&3&5		&3'						&6'	&1						&9\ar@{-}[dr]\ar@{-}[dl]	&5								&3		&6	&3&1\ar@{-}[ul]	&3'	\\
&4'\ar@{-}[ul]\ar@{-}[ur]\ar@{-}[urr]\ar@{-}[u]&	&4\ar@{-}[ul]\ar@{-}[ur]\ar@{-}[ull]\ar@{-}[u]	&	&8	\ar@{-}[dl]\ar@{-}[drr]\ar@{-}[d]						&				&7\ar@{-}[d]	\ar@{-}[dr]\ar@{-}[dll]		&		&4\ar@{-}[u]\ar@{-}[ul]\ar@{-}[ull]		&&2\ar@{-}[u]\ar@{-}[ul]\ar@{-}[ur]		\\
&&1\ar@{-}[ul]\ar@{-}[ur]	&	&3	\ar@{-}[drr]\ar@{-}[urrr]&					5\ar@{-}[dr]	&	&	6	\ar@{-}[dl]						&3'\ar@{-}[dll] 		&		\\		
&&&		&						&							&4				&								&		&\\
}\end{minipage}
\begin{minipage}{60mm}
\def\objectstyle{\scriptstyle}
\xymatrix@=6pt{
& 		& 		& 2 \ar@{-}[dl]		&		 &&&4  \ar@{-}[dl]\ar@{-}[dr]\ar@{-}[d]		\\
&&1		&3		&6		&5			&3			&1		&3'			\\
&&2\ar@{-}[ur]		&4\ar@{-}[dl]\ar@{-}[ur]				&8			&4\ar@{-}[u]		&		&4'\ar@{-}[ull]		&2\ar@{-}[ull]\ar@{-}[ul]\ar@{-}[u]		&	\\
&&1				&3\ar@{-}[ur]	&5\ar@{-}[u]		&6\ar@{-}[ul]				&3\ar@{-}[ul]\ar@{-}[ur]		&1\ar@{-}[ull]\ar@{-}[u]		&3'\ar@{-}[ul]\ar@{-}[ulll]		&	\\
&&&		&4\ar@{-}[u]\ar@{-}[ur]\ar@{-}[ul]		&		&	&2\ar@{-}[ul]	\ar@{-}[ur]\ar@{-}[u]	&		&	\\
&\\  
}
\end{minipage}
\end{align*}

\noindent
where the symmetric versions of these modules are not listed, as usual.  
\end{list}
\end{thm}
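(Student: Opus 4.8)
\textbf{Proof proposal for Theorem \ref{thm:2}.}

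The plan is to reduce the entire computation to the projective-module side via the Jantzen isomorphism of subsection \ref{Jantzen-iso}, and then to extract socle/radical layers using the BGG-type reciprocity stated after \eqref{eq:BH-reciprocity} together with the Weyl module structures recorded in Section \ref{sec:Weyl}. Concretely, for each tilting module $T(\nu)$ appearing in the statement, one first notes that $\nu$ lies in a translate of a restricted box so that $T(\nu)$ is a summand (indeed typically all) of some $P(\lambda)$ after twisting by $2(p-1)\rho$; equivalently, $T(\nu)$ itself is projective-injective in the appropriate Schur algebra $S(\le \mu)$. The $\Delta$-filtration multiplicities $[T(\nu):\Delta(\sigma)]$ are then pinned down by \eqref{eq:BH-reciprocity} as $[\nabla(\sigma):L(\nu)]$, which is read off from the rigid Weyl module diagrams of Section \ref{sec:Weyl}; since all relevant $\nabla(\sigma)$ are multiplicity-free in the region under consideration, this gives an exact list of the Weyl factors of each $T(\nu)$.

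Next I would locate the radical/socle layers. The refined reciprocity
\[
[\rad_iP(\mu):\Top\Delta(\lambda)] = [\rad_i\Delta(\lambda):L(\mu)]
\]
(applied to $P = T(\nu)$ as a projective in its Schur algebra) places each Weyl subquotient $\Delta(\sigma)$ of $T(\nu)$ at a definite radical depth, namely the depth of $L(\nu)$ inside $\Delta(\sigma)$, which is again visible in the Section \ref{sec:Weyl} diagrams. Superimposing the (known, rigid) internal Loewy structure of each $\Delta(\sigma)$ onto these depths produces the candidate strong Alperin diagram; contravariant self-duality of tilting modules forces the socle series to be the reverse of the radical series, so establishing one establishes rigidity. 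The $p$-regular cases $T(3),T(4),T(6)$ and the $p$-singular uniserial cases are small enough that this bookkeeping is essentially forced; for $T(5),T(7),T(8),T(9)$ and the larger singular modules one checks that the displayed diagram is consistent with (i) the $\Delta$-filtration just computed, (ii) the dual $\nabla$-filtration, and (iii) the self-duality, and then argues that no coarser diagram is possible because each adjacent pair of layers is linked by a nonzero $\Ext^1$ coming from a $\Delta$- or $\nabla$-extension.

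The main obstacle is the last point: showing the Loewy length is not larger than claimed, i.e.\ that the proposed layers genuinely collapse and there are no ``hidden'' extensions splitting a layer into two. For this I would use Proposition \ref{donkin}(a) to rule out upward extensions between Weyl modules with incomparable or wrongly-ordered highest weights, together with the dimension count Proposition \ref{donkin}(b) to verify $\dim\Hom_G(T(\nu),T(\nu))$ and $\dim\Hom$ between the $T(\nu)$ and the standard/costandard objects match the proposed diagram exactly — any discrepancy would reveal a missing or spurious arrow. In the genuinely large cases ($T(7)$, $T(9)$, $T(8)$) where the full radical series is only asserted ``up to Loewy structure'', the argument is necessarily the weaker one of exhibiting a self-dual filtration realising the stated layers and checking against the Weyl-filtration picture drawn in the statement; the finer coefficient-quiver information is deferred, consistent with the remark in the introduction that no general theory handles quotients of path algebras. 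I expect the translation principle (quoted in \ref{ss:conventions}) to reduce the $p$-singular wall cases to finitely many small Schur algebras, so those follow the same template with the $p$-singular Weyl diagrams of Section \ref{sec:Weyl} in place of the regular ones.
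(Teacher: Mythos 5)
Your proposal correctly identifies the engine of the argument (Jantzen's identification of certain tilting modules with projective covers, Brauer--Humphreys reciprocity, the refined reciprocity of Theorem \ref{BGG}, and contravariant self-duality), and for the ``top'' tilting modules of each block --- $T(4|5)$, $T(5|7)$, $T(7|9)$, $T(8|9)$, $T(5)$, $T(7)$, $T(9)$, and also $T(4)$, $T(6)$ after a translation argument --- this template is exactly what the paper runs. However, there are two genuine gaps.

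First, you assume each $T(\nu)$ in the list is projective-injective in the relevant Schur algebra, and the proposal hinges on ``applying Theorem \ref{BGG} to $P=T(\nu)$''. This is false for $T(6|8)$ and $T(8)$: the paper explicitly flags these as the \emph{non-local} members of $\F(\SL_3)$ (non-simple head, non-simple socle), so they are not projective covers and Theorem \ref{BGG} cannot be invoked directly. The paper handles $T(6|8)$ by hand from its $\Delta$- and $\nabla$-filtrations, and handles $T(8)$ by computing the character of $E\otimes T(6|8)$, ruling out the candidate splitting $T(3)\oplus T(8)$ via a contravariant self-duality contradiction, and then placing the Weyl factors using a $\dim\Hom_{S(\le 8)}(P(4),T(8))$ count. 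Your proposal gives no mechanism for these two modules.

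Second, you underestimate the difficulty when a Loewy layer is not multiplicity-free. For $T(5|7)$ the third layer contains two copies of $L(4|6)$'s neighbours, and the paper states explicitly that the $\Delta$- and $\nabla$-filtrations do not determine the arrows; it resolves the ambiguity by exhibiting $L(4|5)\otimes E^*$ and $L(4'|5)\otimes E$ as contravariantly self-dual subquotients and chasing them through the translation functor. Similarly $T(4)$ has $L(2)\oplus L(2)$ in its middle layer, and the paper needs the quiver-with-relations presentation of $S(1,2,3,3')$ (Proposition \ref{T(4)}), the module $M(2)$, and reconciliation of three distinct filtrations $(\dagger)$, $(\dagger\dagger)$, $(\dagger\dagger\dagger)$ to nail down the coefficient quiver. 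Your appeal to Proposition \ref{donkin}(a),(b) and to ``each adjacent pair of layers is linked by a nonzero $\Ext^1$'' fixes the \emph{layers}, but not which simple-to-simple arrows are actually realised when a layer contains a repeated composition factor; that is exactly the information the coefficient-quiver and translation-functor arguments supply, and without some substitute for them the Alperin diagrams of $T(4)$ and $T(5|7)$ remain undetermined.
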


Note that all members of $\F$ have a simple $p$-restricted
$G_1T$-socle (and head) except for $T(8)$ and $T(6|8)$ for $p \ge 5$.
Therefore every direct summand in the decomposition (1.1.3) of
\cite{BDM1} is indecomposable, unless it involves a factor of the form
$T(\lambda)$, for $\lambda \in C_8 \cup \mathcal{F}_{6|8}$.  Because
of the upper bound constraint of $2(p-1)\rho$ on the highest weights
of tilting members of $\F$, as depicted in Figure \ref{fig:3}, when
$p=5$ there are no members $T(\lambda)$ in $\F$ with $\lambda \in C_8
\cup C_9$, although such modules do appear for $p > 5$.

The rest of the paper is devoted to the proof of these results.  The
proof of Theorem \ref{thm:2} is given in Sections \ref{tilting} and
\ref{sec:p-reg}. The proof of Theorem \ref{thm:1} is given in Sections
\ref{decomp} and \ref{sec:c2c2}. First we need the structure of
certain Weyl modules, which are summarized in the next section.

\section{The Structure of Certain Weyl Modules}\label{sec:Weyl}
\noindent
In \cite{parker} the $p$-filtration structure of Weyl modules for
$\SL_3(K)$ is determined for all primes.  When these layers are
semisimple this gives the radical structure of the Weyl modules.
Therefore when $p\ge5$ and we consider weights from the first
$p^2$-alcove this re-derives the generic structures calculated in
\cite{DS1} and \cite{ron}, which we shall recall below.  The
$p$-filtrations are semisimple for all but three of the Weyl modules
considered in \cite{BDM1}.  For a given prime these calculations can
also be checked using the Weyl module GAP package available on the
second author's web page.

We remind the reader of the notational conventions of
\ref{ss:conventions} and in particular that in diagrams we will
identify simple modules with their facet label.  The structure of the
$p$-singular Weyl modules in question is given by the following strong
Alperin diagrams, where as in \cite{DH}, \cite{BDM1} we use the
notation $[L_1, L_2, \dots, L_s]$ to depict the structure of the
unique uniserial module $M$ with composition factors $L_1, \dots, L_s$
arranged so that $\rad_iM \cong L_i$ for all $i$.  \begin{align*}
  \Delta(1|2)=[(1|2)], \ \ & \ \ \Delta(2|3)=[(2|3)],
  \\ \Delta(3|4)=[(3|4),(2|3')], \ \ & \ \ \Delta(4|6)=[(4|6),(1|2)],
  \\ \Delta(4|5)=[(4|5),(3'|4'),(2|3)],\ \ &
  \ \ \Delta(6|8)=[(6|8),(4|5)],\\ \Delta(8|9)=[(8|9),(5|7),(4|6)],\ \ &
  \\ \Delta(5|7)= \begin{minipage}{34mm}
    \def\objectstyle{\scriptstyle} \xymatrix@=6pt{ & (5|7) \ar@{-}[dr]
      \ar@{-}[dl] & \\ (4'|6') & & (4|6) \\ & (1|2) \ar@{-}[ur]
      \ar@{-}[ul] & }\end{minipage},\ \ &
  \ \ \Delta(7|9)= \begin{minipage}{34mm}
    \def\objectstyle{\scriptstyle} \xymatrix@=6pt{ & (7|9) \ar@{-}[dr]
      \ar@{-}[dl] & \\ (6|8) & & (3'|4') \\ & (4|5) \ar@{-}[ur]
      \ar@{-}[ul] & }\end{minipage}.
\end{align*}
The structure of the $p$-regular Weyl modules we need is as follows,
where once again each diagram is a strong Alperin diagram.
\begin{align*}
\Delta(1)=[1], \ \ \ \Delta(2)=[2,1], \ \ \ \  \ \Delta(3)=[3,2], \ \ \ \  \ \Delta(6)=[6,4,1],
\end{align*}
\begin{align*}\Delta(4) =\begin{minipage}{34mm}
\def\objectstyle{\scriptstyle}
\xymatrix@=6pt{
  &4 \ar@{-}[dl] \ar@{-}[dr] & \\
3\ar@{-}[dr] &1 \ar@{-}[d] \ar@{-}[u] &3'\ar@{-}[dl] \\
  & 2&
} \; \end{minipage}, \ \ \  \ 
\Delta(8) =\begin{minipage}{34mm}
\def\objectstyle{\scriptstyle}
\xymatrix@=6pt{
  &8\ar@{-}[dl] \ar@{-}[dr] & \\
3\ar@{-}[dr] &5 \ar@{-}[d] \ar@{-}[u] &6\ar@{-}[dl] \\
  & 4&
} \; \end{minipage},
\end{align*}
\begin{align*}
\Delta(5) =\begin{minipage}{34mm}
\def\objectstyle{\scriptstyle}
\xymatrix@=6pt{
&5\ar@{-}[dl] \ar@{-}[dr] \\
4\ar@{-}[drr] \ar@{-}[d] \ar@{-}[dr] &		&4' \ar@{-}[d] \ar@{-}[dl] \ar@{-}[dll] \\
3\ar@{-}[dr] &1 \ar@{-}[d] \ar@{-}[ul] &3'\ar@{-}[dl] \\
  & 2&
}  \end{minipage}, \ \ \ \ 
\Delta(7) =\begin{minipage}{34mm}
\def\objectstyle{\scriptstyle}
\xymatrix@=6pt{
&		&		&7\ar@{-}[dl] \ar@{-}[dr]\ar@{-}[dll] \ar@{-}[drr]\ar@{-}[d]		&			&\\
&6		&3		&5		&3'			&6'\\
&		&4\ar@{-}[ul] \ar@{-}[ur]	\ar@{-}[urr] \ar@{-}[u]			&		&4'	\ar@{-}[ul] \ar@{-}[ur]	\ar@{-}[ull] \ar@{-}[u]			&\\
&		&		&1\ar@{-}[ul] \ar@{-}[ur]		&			&\\
}  \end{minipage},  \ \ \ \ 
\Delta(9) =\begin{minipage}{34mm}
\def\objectstyle{\scriptstyle}
\xymatrix@=6pt{
			&			&9	\ar@{-}[dl] \ar@{-}[dr]\\
			&8	\ar@{-}[dl] \ar@{-}[drr]\ar@{-}[d]			&			&7\ar@{-}[d] \ar@{-}[dr]\ar@{-}[dll] \ar@{-}[dlll]&\\
3			&5				&			&6		&3'\\
			&				&4	\ar@{-}[ul] \ar@{-}[ur]\ar@{-}[ull] \ar@{-}[urr]
}  \end{minipage}.
\end{align*}
All of these Weyl modules, including the $p$-singular ones, are rigid.

\section{The $p$-singular tilting modules}\label{tilting}\noindent
We now begin the proof of Theorem \ref{thm:2}. The characters of the
tilting modules for $G = \SL_3$ are known \cite{Jensen,Parker} (see
also \cite{deV}) so our task is just to prove the structural results
in Theorem \ref{thm:2}. This proof is split over the next two
sections. The present section considers only the $p$-singular case
while the next considers the $p$-regular case.

\subsection{} 
It turns out that most tilting modules $T(\lambda)$ that we consider
are projective for the generalised Schur algebra $S(\le \lambda)$.  To
prove injectivity (and hence projectivity) one need only check that
\begin{enumerate}
  \item[(a)] the socle of the tilting module is simple, and

  \item[(b)] the character of the relevant projective module (given by
    Proposition \ref{prop:BH-reciprocity}) coincides with the tilting
    character.
\end{enumerate}
Note that part (a) can usually be done by constructing an injection
into a tilting module of the form $T(2(p-1)\rho+\varpi_0\lambda)$ for
$\lambda\in X_1$ and applying \ref{Jantzen-iso}.

We begin with the $p$-singular tilting modules, not only because their
structures tend to be less complicated, but also because their images
under translation functors provide useful filtrations of the
$p$-regular tilting modules considered in the next section.
 
\subsection{}\label{BGGeg}
We shall build the tilting modules $T(2|3), T(3'|4')$ and $T(4|5)$ as
modules for the Schur algebra $S(\pi)$ corresponding to the poset
$\pi=\{(2|3)<(3'|4')<(4|5)\}$.  We begin with $T(4|5)$. By
\ref{Jantzen-iso} the tilting module $T(4|5) = P(2|3)$ is the
projective cover of $L(2|3)$.  By Proposition \ref{prop:BH-reciprocity}
and the Weyl module structure in Section \ref{sec:Weyl}, the
projective module $P(2|3)$ has a $\Delta$-filtration with
$\Delta$-factors $\Delta(2|3)$, $\Delta(3'|4')$, and $\Delta(4|5)$
each occurring with multiplicity one.  Using Proposition \ref{BGG} we
can locate where the heads of the $\Delta$-modules occur in a radical
filtration of $T(4|5)$.  The diagram below gives the radical structure
of the module
\begin{align*}
\begin{minipage}{60mm}
\def\objectstyle{\scriptstyle}\xymatrix@=6pt{
&		&(2|3)		&		\\
&		&(3'|4')\ar@{-}[dr]		&		\\
&(4|5)\ar@{-}[dr]		&		&(2|3)		\\
&		&(3'|4')\ar@{-}[d]		&		\\
&		&(2|3)		&		\\
}
\end{minipage} 
\end{align*}
in which the connected components are the layers in the
$\Delta$-filtration. Since $\rad_1(\Delta(2|3)) = L(2|3)$,
$\rad_2(\Delta(3'|4')) = L(2|3)$ and $\rad_3(\Delta(4|5))=L(2|3)$, by
Proposition \ref{BGG} the heads of the $\Delta$-modules $\Delta(2|3)$,
$\Delta(3'|4')$ and $\Delta(4|5)$ appear in the first, second, and
third layers of $P(2|3)$ respectively (as pictured above).  Note that
$\rad_4(P(2|3))=L(3'|4')$, however this module is not the head of a
$\Delta$-module in a $\Delta$-filtration.  Considering also the
$\nabla$-filtration gives the Alperin diagram
\begin{align*}
\begin{minipage}{60mm}
\def\objectstyle{\scriptstyle}\xymatrix@=6pt{
&		&(2|3)\ar@{-}[d]		&		\\
&		&(3'|4')\ar@{-}[dr]	\ar@{-}[dl]	&		\\
&(4|5)\ar@{-}[dr]		&		&(2|3)	\ar@{-}[dl]	\\
&		&(3'|4')\ar@{-}[d]		&		\\
&		&(2|3)		&		\\
}
\end{minipage}
\end{align*} 
of $T(4|5)$ shown above.  This is projective-injective and so
Proposition \ref{BGG} (and the subsequent remark) give both the
radical and socle structure. Hence $T(4|5)$ is rigid and the above
diagram is a strong Alperin diagram.

The above also gives us the structure of $T(3'|4')$, which appears as
a quotient of $T(4|5)$. To see this, notice that the module
$P(2|3)=T(4|5)$ has a uniserial quotient module isomorphic to $[(2|3),
  (3'|4'), (2|3)]$; this quotient has both $\Delta$ and $\nabla$
filtrations, hence is tilting and isomorphic to $T(3'|4')$.  Moreover,
since $\Delta(2|3)=L(2|3)=\nabla(2|3)$, the module $T(2|3)=L(2|3)$ is
a simple tilting module.

We now determine the structure of $T(6|8)$.  The calculation is
similar to that given above, so we only sketch it.  We have that
$\Delta(6|8)=[(6|8),(4|5)]$, so $\Delta(4|5)$ must appear at the top.
Contravariant duality and our knowledge of the other Weyl modules in
the  linkage class   give us the $\Delta$-filtration 
\begin{align*}
\begin{minipage}{60mm}
\def\objectstyle{\scriptstyle}\xymatrix@=6pt{
&&(4|5)\ar@{-}[dr]		&	&(2|3)		\\
&(6|8)\ar@{-}[dr]	&		&(3'|4')\ar@{-}[dr]		&		\\
&&(4|5)		&&(2|3)		&		\\
}
\end{minipage}
\end{align*}
of $T(6|8)$ as shown above. Finally, consideration of the
$\nabla$-filtration gives us the full structure diagram as
depicted in Theorem \ref{thm:2}.

We now consider $T(7|9)$.  
By \eqref{eq:tptilt} we have an isomorphism $T(7|9) \cong T(4'|5)
\otimes E^{(1)}$, and by comparing characters (or composition factors)
we see that $T(7|9) = P(4|5)$, as $S(\le 7|9)$-modules. Therefore
$T(7|9)$ is rigid by Proposition \ref{BGG}.  The full Alperin diagram
can then be deduced from the $\Delta$ and $\nabla$-filtrations.

\subsection{Coefficient quivers}\label{sec:cq}
To proceed further we need to use methods from the theory of finite
dimensional algebras. This need will come up again in Section
\ref{sec:p-reg}. Our approach will be similar to that of Ringel's
Appendix to \cite{BDM1}. In particular, when writing quiver and
relations we will switch to right modules instead of left ones, so
that composite paths can be read from left to right.  Recall that by
Gabriel's theorem (see e.g., Proposition 4.1.7 in \cite{Benson}) the
basic algebra of any finite dimensional algebra is isomorphic to a
suitable quotient of the path algebra of the ext-quiver of the
algebra.

We recall the terminology of coefficient quivers.  Let $K$ be a
field, let $Q$ be a (finite) quiver and $KQ$ the path algebra of $Q$
over $K$.  Recall that a representation $N$ of $Q$ over $K$ associates
to each vertex $i \in Q$ a vector space $N_i$ and to each arrow
$\alpha: i \to j$ a linear transformation $N_i \to N_j$. There is a
natural correspondence between representations of $Q$ and
$KQ$-modules. See \cite{ring1} or \cite{Benson} for more details.

Given a representation $N$, let $d_i$ be the dimension of $N_i$, and
$d = \sum_i d_i$; $d$ is called the dimension of $N$. A basis
$\mathcal{B}$ of $N$ is by definition a subset of the disjoint union
of the various $K$-spaces $N_i$, such that for any vertex $i$ the set
$\mathcal{B}_i = \mathcal{B} \cap N_i$ is a basis of $N_i$. Let us
assume that such a basis $\mathcal{B}$ of $N$ is given. For any arrow
$\alpha: i \to j$, we may express $N_\alpha$ as a $(d_j \times
d_i)$-matrix whose rows are indexed by $\mathcal{B}_j$ and whose
columns are indexed by $\mathcal{B}_i$. We denote by
$N_{\alpha,\mathcal{B}}(b,b')$ the corresponding matrix coefficients,
where $b \in \mathcal{B}_i$, $b' \in \mathcal{B}_j$, these matrix
coefficients $N_{\alpha,\mathcal{B}}(b, b')$ are defined by
$N_\alpha(b) = \sum_{b' \in
  \mathcal{B}}N_{\alpha,\mathcal{B}}(b,b')b'$.  By definition, the
coefficient quiver $\Gamma(N, \mathcal{B})$ of $N$ with respect to
$\mathcal{B}$ is the oriented graph with $\mathcal{B}$ as set of
vertices, and there is an arrow $\alpha: b \to b'$ provided
$N_{\alpha,\mathcal{B}}(b,b') \neq 0$. Usually that arrow would take
$\alpha$ as label, but since we deal only with quivers without
multiple arrows, the labels are omitted. We will always arrange our
coefficient quivers so that arrows point downwards; then arrows can be
replaced by edges.  It should be noted that the choice of basis can
affect the shape of the coefficient quiver.

\subsection{}\label{wallstuff}
We now consider the generalised Schur algebra $S(\le 8|9)=S(\pi)$, or
rather its basic algebra.  From the structure diagrams of the Weyl
modules in Section \ref{sec:Weyl} it follows that the $\Ext^1$-quiver
$P$ for $S(\le 8|9)$ with indexing set $\pi=\{1|2,4|6,4'|6',5|7,8|9\}$
is as illustrated in Figure \ref{figP}.

\begin{figure}[h]
\begin{center}
\begin{minipage}{30mm}  \xymatrix{		
		&		&	8|9  \ar@<2pt>[d]^{a}														\\
	&			& 5|7 \ar@<2pt>[dll]^{c_1'} \ar@<2pt>[drr]^{c_2'}  \ar@<2pt>[u]^{a'}	&					&	 \\
4|6   \ar@<2pt>[drr]^{b_1} \ar@<2pt>[urr]^{c_1}				&			& 							&			& 4'|6' \ar@<2pt>[dll]^{b_2} \ar@<2pt>[ull]^{c_2} \\
												&			&1| 2  \ar@<2pt>[ull]^{b_1'} \ar@<2pt>[urr]^{b_2'} 	&			& \\
}\end{minipage}
 \end{center}
\caption{The quiver $P$. }
 \label{figP}
\end{figure}
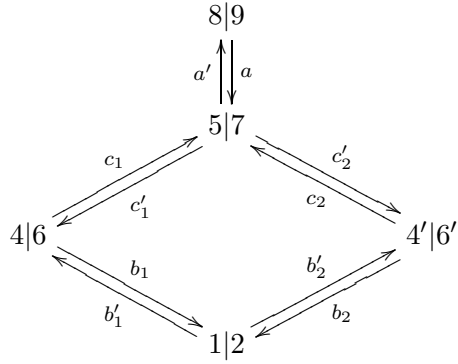

We label the idempotents corresponding to the nodes by $e_{1|2}$,
$e_{4|6}$, $e_{4'|6'}$, $e_{5|7}$, and $e_{8|9}$. By Gabriel's theorem
$S(\le 8|9)$ is Morita equivalent to a quotient of the path algebra of
$P$.

\begin{prop}\label{prop:S89}
  The basic algebra of the Schur algebra $S=S(\leq 8|9)$ is isomorphic to
  the path algebra of $P$ modulo the ideal generated by the following
  relations:
  \begin{gather*}  
  aa'=0, ac_2'=0, ac_1'b_1=0, ac_1'c_1=0, 
  c_i'c_i=   a'a,  
  c_1'b_1= c_2'b_2, 
  c_1'b_1b_2'=0,  
\\
  b_1b_1'= c_1c_1',    
  b_1b_2'= c_1c_2',    c_2a'=0,
  b_2b_1'= c_2c_1', 
     b_2b_2'= (-1)c_2c_2',  
  b_1'c_1 = b_2'c_2.
  \end{gather*}
%
%
\end{prop}

\begin{proof}
By Proposition \ref{prop:BH-reciprocity}, the projective
indecomposable modules for $S(\leq 8|9)$ have the following
$\Delta$-filtrations (going downwards)
$$
\begin{array}{ccc}
P(1|2) 		&& \Delta(1|2) |  \Delta(4|6)\oplus  \Delta(4'|6') |  \Delta(5|7) \\
P(4|6) 		&& \Delta(4|6) |  \Delta(5|7)|\Delta(8|9) \\
P(4'|6') 		&& \Delta(4'|6') |  \Delta(5|7) \\
P(5|7) 		&& \Delta(5|7) | \Delta(8|9) \\
P(8|9) 		&&  \Delta(8|9) .
\end{array}
$$ By equation (\ref{Jantz}), we have the isomorphism $T(4|5)\cong
P(2|3)$ and by equation (\ref{eq:tptilt}) we have $T(8|9) \cong T(4|5)
\otimes E^{[1]}$.  This implies that $T(8|9)$ has simple head; it
therefore appears as a quotient of $P(4|6)$.  One can check that the
character of $T(4|5)\otimes E^{[1]}$ is equal to that of $P(4|6)$
given above, and therefore $T(8|9)=P(4|6)$ for $S(\leq 8|9)$.  By
equation (\ref{Jantz}), $P(1|2)$ is isomorphic to the tilting module
$T(5|7)$ for $S(\leq 8|9)$.

The Loewy layers of the module $T(8|9)$ are multiplicity-free and so
its structure is given by
$$ \begin{minipage}{60mm}
\def\objectstyle{\scriptstyle}
\xymatrix@=6pt{
						&(4|6)	\ar@{-}[dr]\ar@{-}[dl]		&	\\
(5|7)	\ar@{-}[drr]\ar@{-}[dr]\ar@{-}[d]	&					&(1|2)\ar@{-}[d]\ar@{-}[dll]\\
(4|6)					&(8|9)				&(4'|6')		\\
(5|7)	\ar@{-}[urr]\ar@{-}[ur]\ar@{-}[u]	&					&(1|2)\ar@{-}[u]\ar@{-}[ull]\\
			&(4|6)	\ar@{-}[ur]\ar@{-}[ul]			&		  }
 \end{minipage} $$
as determined by Proposition \ref{BGG} for both the $\Delta$ and
$\nabla$-filtrations. 

A tilting module $T(\lambda)$ has a unique submodule isomorphic to
$\Delta(\lambda)$ and a unique quotient module isomorphic to
$\nabla(\lambda)$. By definition (Section 4 of \cite[Appendix]{BDM1})
the \emph{core} of $T(\lambda)$ is $C(\lambda) =
Q(\lambda)/R(\lambda)$, where $R(\lambda)$ is the radical of
$\Delta(\lambda)$ and $Q(\lambda)$ is the kernel of the canonical
quotient map $T(\lambda) \to \nabla(\lambda)/L(\lambda)$. It is easily
checked that $R(\lambda) \subset Q(\lambda)$ and that $L(\lambda)$ is
a direct summand of the quotient $C(\lambda) = Q(\lambda)/R(\lambda)$.
From the structure of $T(8|9)$, we see that its core decomposes as a
direct sum of $L(8|9)$ and the module pictured in Figure \ref{b1b2}.
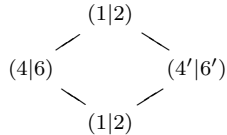
\begin{figure}[h]
\begin{minipage}{60mm} \def\objectstyle{\scriptstyle}
\xymatrix@=6pt{
 &					&(1|2) 	\ar@{-}[dr]	\ar@{-}[dl]					&		\\
&(4|6)	\ar@{-}[dr]				& 							&(4'|6')	\ar@{-}[dl]		\\
 &					&(1|2) 	  			&
  }
 \end{minipage} 
 \caption{The  non-simple direct summand of the core of $T(8|9)$.}
 \label{b1b2}
\end{figure}

Arguing by symmetry, we can obtain the structure for $T(8'|9')$ from
that of $T(8|9)$ above. This equals $P_{\pi'}(4'|6')$ for the
generalised Schur algebra $S(\pi') = S(\le 8'|9')$. It has a unique
submodule isomorphic to $\Delta(8'|9')$ and the corresponding quotient
module $T(8'|9')/\Delta(8'|9')$ is isomorphic to $P(4'|6')$ for $S(
\le 8|9)$. This determines the structure of $P(4'|6')$.  Furthermore,
by Proposition \ref{donkin} and considerations of the Loewy structure
of $P(5|7)$ we deduce that $T(5|7)$ embeds in $T(8|9)$.  Therefore,
the coefficient quivers of $P(5|7)$ and $P(4'|6')$ are as follows
$$ \begin{minipage}{60mm}
\def\objectstyle{\scriptstyle}
\xymatrix@=6pt{
(5|7)	\ar@{-}[drr]\ar@{-}[dr]\ar@{-}[d]	&					& \\
(4|6)					&(8|9)				&(4'|6')	\ar@{-}[dll] 	\\
(5|7)	 \ar@{-}[ur]\ar@{-}[u]	&					&(1|2)\ar@{-}[u]\ar@{-}[ull]\\
			&(4|6)	\ar@{-}[ur]\ar@{-}[ul]			&		  }
 \end{minipage}  \quad \quad
 \begin{minipage}{60mm}
\def\objectstyle{\scriptstyle}
\xymatrix@=6pt{
						&(4'|6')	\ar@{-}[dr]\ar@{-}[dl]		&	\\
(5|7)	\ar@{-}[drr] \ar@{-}[d]	&					&(1|2)\ar@{-}[d]\ar@{-}[dll]\\
(4|6)					& 				&(4'|6')		\\
  &					&(1|2)\ar@{-}[u]\ar@{-}[ull]	  }
 \end{minipage} .$$

\noindent
The diagrams allow us to immediately deduce when a coefficient in the
quiver is equal to zero.  The extensions in the diagram all contribute
non-zero coefficients.

From $P(8|9)=\Delta(8|9)=[8|9, 5|7, 4|6]$, we have that $aa'=ac_2'=0,
ac_1'b_1= ac_1'c_1=0$.
From the coefficient quiver for $P(5|7)$ we deduce that
$c_1'b_1=c_2'b_2 $, 
 $	c_1'b_1b_2'=0		$,  
and 
  $c_1'c_1=
  \alpha  
  a'a$,   
 $c_2'c_2=
  \beta a'a$,
   for some non-zero constants  
$\alpha, \beta  \in K$.

Consider the projective module $P(4|6)=T(8|9)$ (as pictured
above).  From the coefficient quiver for $P(4|6)$, we deduce that
$b_1b_j'= \gamma_j c_1c_j'$ for non-zero coefficients $\gamma_j\in K$
where $j=1,2$.  By examining $P(4'|6')$ in a similar fashion, we
deduce that $b_2b_j'= \delta_j c_2c_j'$ for non-zero coefficients
$\delta_j \in K$, but with the additional relation $c_2a'=0$.

%
%
%

Consider the structure of the quotient $\nabla(5|7)$ of
$T(5|7)=P(1|2)$.  By self duality, we have that $b_1'b_1= \zeta
b_2'b_2$ for a non-zero coefficient $\zeta \in K$.  The module
$\nabla(5|7)$ has coefficient quiver
$$
\begin{minipage}{60mm} \def\objectstyle{\scriptstyle}
\xymatrix@=6pt{
 &					&(1|2) 	\ar@{-}[dr]	\ar@{-}[dl]					&		\\
&(4|6)	\ar@{-}[dr]				& 							&(4'|6')	\ar@{-}[dl]		\\
 &					&(5|7) 	  			&
  }
 \end{minipage} .
$$ Choosing a suitable basis of $\nabla(5|7)$, we can assume that at
least 3 of the non-zero coefficients are equal to 1 and we look at the
remaining coefficient, say that for the arrow $c_2$. It will be a
non-zero scalar $k \in K$. Recall that we have started with a
particular generator choice for the algebra $S(\le 8|9)$ which we can
change. If we replace the element $c_2$ by $(1/k)c_2$, then the
coefficients needed for $\nabla(5|7)$ will all be equal to 1.
Similarly, the coefficients $\alpha, \beta$ can to be chosen to be
equal to 1.

Now consider the submodules of $P(1|2)$ generated by the copies of
$L(1|2)$ in the third Loewy layer; we shall use the self-duality of
$P(1|2)$ and the homomorphisms from other projective modules into
$P(1|2)$, in order to deduce the values of the coefficients
$\gamma_j, \delta_j \in K$ for $j=1,2$.


The module $P(4|6) = T(8|9)$ has a unique submodule isomorphic to
$\Delta(8|9)$.  We let $P'(4|6)$ denote the corresponding quotient
module $P(4|6)/\Delta(8|9)$.  (The notation reflects that we have
trivially inflated the corresponding projective module for $S(\pi')$
for $\pi'=\{(1|2), (4|6), (4'|6'), (5|7)\}$).  By Proposition
\ref{donkin} and considerations of Loewy structure, we deduce that
there is an injective map $f_1$ (respectively $f_2$) from $P(4'|6')$
(respectively $P'(4|6)$) to a submodule of $P(1|2)$.  In what follows
we shall identify a simple composition factor of a projective module
with the path in the quiver which terminates at the given simple
composition factor.  An example of how to pass between these two
pictures (the coefficient quiver and the subspace lattice) is given in
\cite[Appendix page 217]{BDM1}.

The injection $f_1$ (respectively, $f_2$) takes the simple head of
$P(4'|6')$ (respectively, of $P'(4|6)$) which is labelled by the
element $e_{4'|6'}$ (respectively $e_{4|6}$) to the simple composition
factor $L(4'|6')$ in the second radical layer of $P(1|2)$, which is
labelled by the path $b_1'$ (respectively $b_2'$). Therefore $f_1$
(respectively $f_2$) takes the simple composition factor $L(1|2)$ in
the second radical layer of $P(4'|6')$ (respectively $P'(4|6)$)
labelled by $b_1$ (respectively $b_2$) to the corresponding simple
factor $L(1|2)$ in the second radical layer of $P(1|2)$ labelled by
$b_1'b_1$ (respectively $b_2'b_2$).  From the diagram of $P(4'|6')$
(respectively $P'(4|6)$) we know that the simple composition factor
labelled by $b_2$ (respectively $b_1$) generates the module with
structure as given in Figure \ref{b1b2}.  We therefore deduce that the
copy of $L(1|2)$ in the third Loewy layer of $P(1|2)$ labelled by the
path $b_1'b_1$ (respectively $b_2'b_2$) generates a submodule
isomorphic to the module given in Figure \ref{b1b2}.

By the self-duality of $P(1|2)$ we know that $[(1|2),(4|6),(1|2)]$ and
$[(1|2),(4'|6'),(1|2)]$ must also appear as submodules of $P(1|2)$ and
are therefore generated by diagonal embeddings of $L(1|2)$ into the
third radical layer of $P(1|2)$, these are labelled by linear
combinations of the paths $b_1'b_1$ and $b_2'b_2$.  Rescaling the
generators if necessary, we may choose $b_1'b_1+b_2'b_2$ (respectively
$b_1'b_1-b_2'b_2$) as the path labelling the diagonal copy of $L(1|2)$
which generates the submodule $[(1|2),(4|6),(1|2)]$ (respectively
$[(1|2),(4'|6'),(1|2)]$).

The submodule of $P(1|2)$ generated by the copy of $L(1|2)$ labelled
by $b_1'b_1$ has composition factors labelled by the following paths
\begin{align*}
		& b_1'b_1 					\\
&b_1' (b_1 b_1')  
 =\gamma_1
 b_1'	c_1c_1'	\\
&  b_1' (b_1 b_2')
 =\gamma_2 b_1'	c_1c_2'	\\ 					
&
 b_1'(b_1b_1')b_1 =
 \gamma_1 b_1'c_1c_1'b_1  ,
\intertext{and the submodule generated by the copy of $L(1|2)$
  labelled by $b_2'b_2$ has composition factors labelled by the
  following paths}
  		& b_2'b_2 					\\
&b_2'b_2b_1'= 
\delta_1 b_2'	c_2c_1' =
\delta_1\gamma_1  
  b_1'	c_1c_1' 
  	\\
&b_2'b_2b_2'= 
\delta_2 b_2'	c_2c_2' =
 \delta_2\gamma_2  b_1'	c_1 c_2' 
  	\\
&(b_2'b_2)(b_2'b_2)=
 \zeta^2(b_1'b_1)(b_1'b_1)  =\zeta^2b_1'(b_1b_1')b_1
 =\zeta^2 \gamma_1 b_1'c_1c_1'b_1 .
 \end{align*}
From our discussion of the embeddings $f_1$ and $f_2$, we deduce that
all of these paths are non-zero.

The diagonal copy of $L(1|2)$ labelled by the path $b_1'b_1+ b_2'b_2$
generates the submodule $[(1|2),(4|6),(1|2)]$.  Therefore, we require
that the coefficients satisfy the following identities
$$ \begin{array}{llllllll} \gamma_1 + \delta_1\gamma_1 \neq 0
  &&\gamma_2+\delta_2\gamma_2=0 \end{array}.$$ Here, the left hand
side of these (in)equalities is the coefficient of the path labelling
the composition factor $L(4|6)$ (respectively $L(4'|6')$) in the
submodule generated by the diagonal copy of $L(1|2)$ labelled by the
path $b_1'b_1+ b_2'b_2$.  We require this coefficient to be non-zero
(respectively zero) in order for the submodule generated by the copy
of $L(1|2)$ labelled by the path $b_1'b_1+ b_2'b_2$ to be of the form
$[(1|2),(4|6),(1|2)]$.

Similar considerations in
the case $b_1'b_1- b_2'b_2$ imply the (in)equalities
$$ \begin{array}{llllllll}\gamma_1 - \delta_1\gamma_1 = 0
  &&\gamma_2-\delta_2\gamma_2 \neq 0 \end{array}.$$ This implies that
the coefficients satisfy $\delta_1=1$ and $\delta_2=-1$.  Rescaling if
necessary, we may choose to take $\gamma_1=\gamma_2=1$.  This
completes the proof.
\end{proof} 
 
\begin{rmk}
  Using the``Quivers and Path Algebras'' package \cite{QPA} for GAP
  \cite{GAP}, we have checked that the algebra described by quiver and
  relations in Proposition \ref{prop:S89} does have the indicated
  dimension. We have also checked that the dimensions and Loewy layers
  of the indecomposable projectives computed by the package agree with
  our results.
\end{rmk}

We provide the two most symmetric coefficient quivers of $T(5|7)$
below.  
 $$  \begin{minipage}{30mm}
\def\objectstyle{\scriptstyle}
\xymatrix@=6pt{
		&	 								& (1|2)	  \ar@{-}[dl] 	 \ar@{-}[dr] 	& 					&\\
		&(4'|6') \ar@{-}[dl] 	 \ar@{-}[dr] 			&		    					& (4|6) \ar@{-}[dr] \ar@{-}[dl] 		& \\
		(1|2)\ar@{-}[dr] 	 \ar@{-}[drrr] &									& (5|7)  \ar@{-}[dr] \ar@{-}[dl]	& 	& (1|2) \ar@{-}[dl]  \ar@{-}[dlll] 	\\
		&(4'|6')							& 				  			&(4|6) \\
		&									& ( 1,2) \ar@{-}[ur] \ar@{-}[ul] 		&
}
\end{minipage}
\quad \quad
\begin{minipage}{30mm}
\def\objectstyle{\scriptstyle}
\xymatrix@=6pt{
		&	 								& (1|2)	  \ar@{-}[dl] 	 \ar@{-}[dr] 	& 					&\\
		&(4'|6')  \ar@{-}[drrr] \ar@{-}[dl] 	 \ar@{-}[dr] 			&		    					& (4|6) \ar@{-}[dr] \ar@{-}[dl]  \ar@{-}[dlll] 		& \\
		(1|2)\ar@{-}[dr] 	&									& (5|7)  \ar@{-}[dr] \ar@{-}[dl]	& 	& (1|2) \ar@{-}[dl] 	\\
		&(4'|6')							& 				  			&(4|6) \\
		&									& ( 1,2) \ar@{-}[ur] \ar@{-}[ul] 		&
}
\end{minipage}.
 $$ 
The only choice to be made is which basis to take for the
2-dimensional space $L(1|2)\oplus L(1|2)$ in the third Loewy layer.
The left coefficient quiver corresponds to the basis $N=\{b_1'b_1,
b_2'b_2 \}$ and the right one corresponds to the basis
$N'=\{b_1'b_1+b_2'b_2,b_1'b_1-b_2'b_2 \}$.

\section{The $p$-regular tilting modules}\label{sec:p-reg}\noindent
Having determined the structure of the $p$-singular tilting modules,
we now turn to the $p$-regular ones.  This will complete the proof of
Theorem \ref{thm:2}.  The structure of the tilting modules $T(1)$,
$T(2)$, and $T(3)$ is easily verified, and is left to the reader, but
$T(4)$ is more complicated.
 
\subsection{}
We first consider the module $T(6)$ as this will be helpful in
determining the structure of $T(4)$.  This module can be seen to be
projective-injective for $S(\le 6)$ by application of translation
functors to the embedding $T(4|6) \hookrightarrow T(5|7)$ (and use of
Proposition \ref{prop:BH-reciprocity} and \ref{Jantzen-iso}).  It is
therefore equal to the projective cover of $L(1)$.  It then follows by
Proposition \ref{BGG} that $T(6)$ is rigid.
The diagram of $T(6)$ in Theorem \ref{thm:2} is obtained using the
equalities $\dim_K \Ext_{G}^1(L(2),L(1)) = 1 = \dim_K
\Ext_{G}^1(L(4),L(1))$ coming from the structure of the Weyl modules,
and by reconciling the $\Delta$-filtration
with the corresponding $\nabla$-filtration.

\subsection{} 
To determine the structure of $T(4)$ we will study the generalised
Schur algebra $S(\le 4)$.  From the structure diagrams of the Weyl
modules in Section \ref{sec:Weyl} it follows that the $\Ext^1$-quiver
$Q$ for $S(\le4) = S(\pi)$ for $\pi = \{1,2,3,3',4\}$ is as
illustrated in Figure \ref{figQ}.
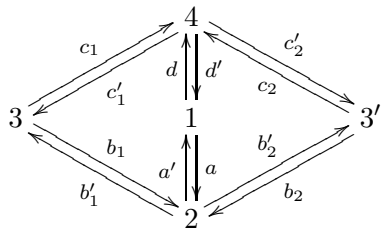
\begin{figure}[h]
\begin{center}
\begin{minipage}{34mm}  \xymatrix{		
												&			& 4 \ar@<2pt>[dll]^{c_1'} \ar@<2pt>[drr]^{c_2'} \ar@<2pt>[d]^{d'}	&					&	 \\
3  \ar@<2pt>[drr]^{b_1} \ar@<2pt>[urr]^{c_1}				&			&1 \ar@<2pt>[d]^{a} \ar@<2pt>[u]^{d}								&			& 3'\ar@<2pt>[dll]^{b_2} \ar@<2pt>[ull]^{c_2} \\
												&			& 2\ar@<2pt>[u]^{a'} \ar@<2pt>[ull]^{b_1'} \ar@<2pt>[urr]^{b_2'} 	&			& \\
}\end{minipage}
 \end{center}
\caption{The quiver $Q$}
 \label{figQ}
\end{figure}
By Gabriel's theorem (see
e.g.\ Proposition 4.1.7 in \cite{Benson}) we have that $S(\le 4)$ is a
quotient of the path algebra of the quiver $Q$.
Notice that applying appropriate translation functors to the embedding
$T(3|4) \hookrightarrow T(4'|5)$ produces an embedding $T(4)
\hookrightarrow T(5)$; see \cite[II.E.11]{Jantzen}.  By Propositions
\ref{prop:BH-reciprocity} and \ref{donkin} it follows that $T(4)$ is
the projective cover of $L(2)$ as an $S(\le 4)$-module.

Before considering the defining relations of $S(\pi)=S(\le 4)$ we
first consider the simpler question of describing the Schur algebra
$S' = S(\pi')$ for $\pi'=\{1,2,3,3'\}$, which is a quotient algebra of
$S(\le 4)$, by quiver and relations. From the Weyl module structure,
the $\rm{Ext}^1$-quiver for $S'$ is the full subquiver
$Q'=Q(1,2,3,3')$ of $Q$ obtained by removing the vertex $4$ and the
arrows $c_1, c_1', c_2, c_2', d, d'$ starting or terminating at that
vertex.  It will soon become necessary to compare projective
indecomposables for $S'$ with those for $S$. Our notational convention
is to use $P(j) = P_\pi(j)$ for the projective cover of $L(j)$ in the
algebra $S = S(\pi)$, and $P'(j) = P_{\pi'}(j)$ for the corresponding
projective cover in $S' = S(\pi')$. We have the following description
of the algebra $S'$.

\begin{prop}\label{T4}
  The basic algebra of the Schur algebra $S'=S(\pi')$ is isomorphic to
  the path algebra of $Q'$ modulo the ideal generated by the following
  relations:
  \begin{center}
  $ab'_i=0 ,\ \ aa'a=0,\ \ b_ia'=b_ib_i'=b_ib_j'=0,
    \ \ a'a=b_1'b_1+b_2'b_2$.
 \end{center}
  where $i\neq j$.
\end{prop}

\begin{proof}
By Proposition \ref{prop:BH-reciprocity}, the projective indecomposable
modules for $S'$ have the following $\Delta$-factors (going
downwards)
\[
\begin{array}{ccc}
P'(1) 		&& \Delta(1) |  \Delta(2) \\
P'(2) 		&& \Delta(2) |  \Delta(3)\oplus \Delta(3') \\
P'(3) 		&& \Delta(3) \\
P'(3') 	&& \Delta(3')  .
\end{array}
\]
From the structure of the Weyl modules and Proposition \ref{BGG} it
follows that $P'(1)$, $P'(3)$, and $P'(3')$ are uniserial with
structure $P'(1)=[1, 2, 1]$, $P'(3) = \Delta(3) = [3,2]$, $P'(3') =
\Delta(3') = [3',2]$. This implies the first three relations in the
proposition.

We now address the remaining relation $a'a=b_1'b_1 + b_2'b_2$. For
this we need structural information about $P'(2)$. Proposition
\ref{BGG} gives the Loewy structure of $P'(2)$ as pictured below
\[
  \begin{minipage}{34mm}
   \def\objectstyle{\scriptstyle}\xymatrix@=6pt{
      &		&2 \ar@{-}[d]		&\\
      &3\ar@{-}[d]		&1 	&3'\ar@{-}[d]\\
      &2		&		&2\\
      }
  \end{minipage}
\]
which has a $\Delta$-filtration with subquotients isomorphic to
$\Delta(3)$, $\Delta(3')$ and $\Delta(2)$.  It is immediate that
$a'a=\beta_1b_1'b_1+\beta_2b_2'b_2$ where $\beta_1$, $\beta_2$ are
scalars, since otherwise the independence of the paths $a'a$,
$b_1'b_1$, $b_2'b_2$ would force $[P'(2)\colon L(2)] > 3$, which is a
contradiction.

By Proposition \ref{donkin} we have $\dim_K \Hom_{G}(P'(2),T(3))=2$.
The Loewy structure of $P'(2)$ implies that one of the two
homomorphisms is given by projection of the head of $P'(2)$ onto the
socle of $T(3)$, and the other homomorphism is a surjection of $P'(2)$
onto $T(3)$.  Therefore $T(3)$ is a quotient module of $P'(2)$ and
$\beta_1 \neq 0$.  We also have that $\beta_2 \neq 0$ by symmetry.
Finally, fixing our choice of $a,a',b_1,b_2$ and adjusting our choice
for $b_1',b_2'$ if necessary, we can pick $\beta_1$ and $\beta_2$ to
both be 1. This concludes the proof.
\end{proof}

\subsection{}\label{M}
\emph{Definition.} Let $M(2)$ denote the quotient module defined by
\begin{align*}
  M(2) = P'(2)/(b_1'b_1-b_2'b_2).
\end{align*}
It is clear that $M(2)$ has strong Alperin diagram
\[ \scriptstyle M(2) = 
\begin{minipage}{30mm}
\def\objectstyle{\scriptstyle}\xymatrix@=6pt{
&		&2\ar@{-}[dr]\ar@{-}[dl]\ar@{-}[d]		&\\
&3\ar@{-}[dr]		&1\ar@{-}[d]		&3'\ar@{-}[dl]\\
&		&2		&		&			\\	
}
\end{minipage}
\]
as already mentioned in \ref{ss:M}. For any $\lambda \in C_2$ we
therefore have a module $M(\lambda)$ with similar structure, as
described in \ref{ss:M}.


We are interested in the structure of $T(4)$ and hence wish to consider the modules which appear 
as both quotients \emph{and} submodules of $T(4)$.  This leads us to consider the  quotient modules of $P'(2)$ which have simple socle isomorphic to $L(2)$.
These are given by taking the quotients corresponding to setting $b_2'=0$, $b_1'=0$, $b_1'b_1-
b_2'b_2=0$ , $b_1'b_1+ b_2'b_2=0$ or $a'=0$. The resulting modules
have coefficient quivers depicted in Figure \ref{lotsapics}.  Each of
these five modules has a corresponding coefficient quiver, with basis
$\mathcal{B}_i$ for $i=1,\ldots ,5$ respectively.
 
 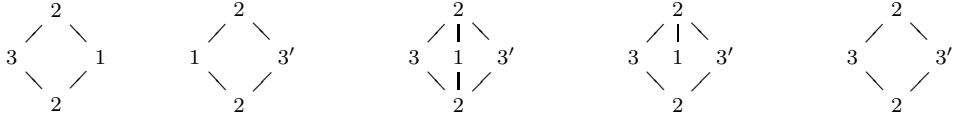
\begin{figure}[ht]
 $$ 
\begin{minipage}{30mm}
\def\objectstyle{\scriptstyle}\xymatrix@=6pt{
&		&2\ar@{-}[dr]\ar@{-}[dl] 	&\\
&3\ar@{-}[dr]		& 		&1\ar@{-}[dl]\\
&		&2		&		&			\\	
}
\end{minipage}
\begin{minipage}{30mm}
\def\objectstyle{\scriptstyle}\xymatrix@=6pt{
&		&2\ar@{-}[dr]\ar@{-}[dl] 	&\\
&1\ar@{-}[dr]		& 		&3'\ar@{-}[dl]\\
&		&2		&		&			\\	
}
\end{minipage}
 \quad
\begin{minipage}{30mm}
\def\objectstyle{\scriptstyle}\xymatrix@=6pt{
&		&2\ar@{-}[dr]\ar@{-}[dl]\ar@{-}[d]		&\\
&3\ar@{-}[dr]		&1\ar@{-}[d]		&3'\ar@{-}[dl]\\
&		&2		&		&			\\	
}
\end{minipage}
\quad
\begin{minipage}{30mm}
\def\objectstyle{\scriptstyle}\xymatrix@=6pt{
&		&2\ar@{-}[dr]\ar@{-}[dl]\ar@{-}[d]		&\\
&3\ar@{-}[dr]		&1 		&3'\ar@{-}[dl]\\
&		&2		&		&			\\	
}
\end{minipage}
\quad
\begin{minipage}{30mm}
\def\objectstyle{\scriptstyle}\xymatrix@=6pt{
&		&2\ar@{-}[dr]\ar@{-}[dl] 		&\\
&3\ar@{-}[dr]		&  		&3'\ar@{-}[dl]\\
&		&2		&		&			\\	
}
\end{minipage}
$$
 \caption{The coefficient quivers of 
   quotient modules of $P'(2)$ corresponding $b_2'=0$, $b_1'=0$,
   $b_1'b_1- b_2'b_2=0$, $b_1'b_1 + b_2'b_2=0$ or $a'=0$ respectively.
   Numbering from left to right, let $\mathcal{B}_i$ denote the basis
   of the $i$th coefficient quiver for $i=1,\ldots, 5$.  The
   non-trivial coefficients are $N_{a,\mathcal{B}_3}(a',b_1'b_1)=2$,
   $N_{b_2,\mathcal{B}_4}(b_2',b_1'b_1)=-1$, and
   $N_{b_2,\mathcal{B}_5}(b_2',b_1'b_1)=-1$.  All other coefficients
   may be chosen to be equal to 1.}
 \label{lotsapics}
\end{figure}


\subsection{}\label{732} 
We now turn our attention to computing the defining relations for
$S(\le 4) = S(\pi)$ with $\pi =\{1,2,3,3',4\}$. The algebra $S(\le 4)$
is the direct sum of its projective indecomposables $P(1)$,
$P(2)$, $P(3)$, $P(3')$ and $P(4)$ and the
corresponding projective indecomposables in $S'$ are homomorphic
images of these. By character considerations or otherwise it is easy
to see that $P(2) = T(4)$. We will soon need the following fact.

\begin{lem}\label{LEMER}
The tilting module $T(4)$ has three filtrations as depicted in the
diagrams below
\[
 \begin{minipage}{34mm}
   \def\objectstyle{\scriptstyle}\xymatrix@=6pt{
      &		&2 \ar@{-}[d]		&\\
      &3\ar@{-}[d]		&1 	&3'\ar@{-}[d]\\
      &2		&4\ar@{-}[dr]\ar@{-}[dl]\ar@{-}[d]		&2\\
      &3\ar@{-}[dr]		&1\ar@{-}[d]		&3'\ar@{-}[dl]\\
      & &2 & & }
  \end{minipage}
  \quad
       \begin{minipage}{34mm}\def\objectstyle{\scriptstyle}\xymatrix@=6pt{
      &		&2\ar@{-}[dr]\ar@{-}[d]		&\\
      &3\ar@{-}[dr]		&1\ar@{-}[dr]		&3'\ar@{-}[d]\\
      &2\ar@{-}[dr]\ar@{-}[d]		&4		&2 		\\
      &3'\ar@{-}[dr]		&1\ar@{-}[d]		&3\ar@{-}[ul]\\
      & &2 & & }\end{minipage}
      \quad
      \begin{minipage}{34mm}\def\objectstyle{\scriptstyle}\xymatrix@=6pt{
      &		&2\ar@{-}[dr]\ar@{-}[dl]\ar@{-}[d]		&\\
      &3\ar@{-}[dlr]		&1\ar@{-}[dl]		&3'\ar@{-}[dll]\\
      &2		&4		&2\ar@{-}[dlr]\ar@{-}[dll]\ar@{-}[dl]		\\
      &3\ar@{-}[dr]		&1\ar@{-}[d]		&3'\ar@{-}[dl]\\
      & &2 & & }
 \end{minipage}
\]
in which the connected components in each diagram identify successive
subquotients of the filtrations.  
\end{lem}

\begin{proof}
The first filtration, depicted in the leftmost diagram above, is a
$\Delta$-filtration whose structure is determined by Proposition
\ref{BGG}.

To obtain the second filtration, first pick a weight $\nu\in
\mathcal{F}_{(3|4)}$ so that $\nu+ \overline{\varepsilon}_1\in C_{4}$.
(See \ref{ss:8:notation} for the notation $\overline{\varepsilon}_1$.)
In what follows, fix all alcove weights to be elements of the linkage
classes $\nu+\overline{\varepsilon}_j$ for $j = 1,2,3$ (There are only
two such linkage classes involved, as $\nu+\overline{\varepsilon}_1$
is linked to $\nu+\overline{\varepsilon}_3$.)  Note that
$T(3|4)=[(2|3'),(3|4),(2|3')]$ is uniserial.  Therefore the
$p$-regular linkage class of $E \otimes T(3|4)$ has a filtration with
layers given by the $p$-regular linkage classes of $E \otimes
L(2|3')$, $E \otimes L(3|4)$ and $E \otimes L(2|3')$.  Since $L(2|3')$
is tilting, it follows by character considerations that the
$p$-regular linkage class of $E \otimes L(2|3')$ is equal to
$T(3')$. This justifies the top and bottom connected components in the
middle diagram above. It remains to show that the $p$-regular linkage
class of $E \otimes L(3|4)$ is uniserial.  It is enough to show that
\[
\dim_K(  \Hom(L(3), E \otimes L(3|4))) =1.
\] 
This is equivalent to showing that $\dim_K( \Hom(L(3) \otimes E^*, L(3|4)))=1,$
which is easily seen to hold, as $L(3) \otimes E^*$ has exactly one
composition factor isomorphic to $L(3|4)$.

%
%
%
%
%

To obtain the third filtration, we switch temporarily to highest
weight notation, and consider for example the tensor product
$T(p-2,p-2) \otimes T(p,0)$, which is tilting, hence a direct sum of
indecomposable tilting modules, with $T(2p-2,p-2) =T(4)$ occurring
with multiplicity one.  Since $T(p-2,p-2) = [K, L(p-2,p-2), K]$ we see
that the tensor product has a filtration with layers
\[
\begin{array}{cc}
  T(p,0) \\ \hline  L(p-2,p-2) \otimes L(p-2,1)\\ \hline  
L(p-2,p-2) \otimes ( 	L(p,0) \oplus   L(p-3,0) )	\\ 
\hline   L(p-2,p-2) \otimes L(p-2,1) \\  \hline 	
T(p,0) 
 \end{array}
\]
where every layer is contravariantly self-dual (as it is a tensor
product of contravariantly self-dual modules).  The simple module
$L(2p-2,p-2) = L(p-2,p-2) \otimes L(p,0)$ of highest weight appears as
a direct summand of the third layer; it must extend both above and
below to result in a module isomorphic to $T(2p-2,p-2)$, i.e. there
must exist modules $N_1$ and $N_2$ which are direct summands of
$T(p,0)$ and $L(p-2,p-2)\otimes L(p-2,1)$ respectively such that $
N_1|N_2|L(4)|N_2|N_1$ is a filtration of $T(4)$. We do not insist that
both $N_1$ and $N_2$ are non-zero.

By assumption, $N_1$ is either equal to $T(p,0)$ or zero.  
Assume that $N_1=T(p,0)$ for a contradiction.  Then (by character
considerations) we have that $N_2=L(0,2p-3)$, this results in a
filtration of $T(2p-2,p-2)$ with layers given by
$$T(p,0) | L(0,2p-3) | L(2p-2,p-2) | L(0,2p-3) | T(p,0).$$ 
Now, setting $V =
L(p-2,p-2)$ to ease the notation, we have
\begin{equation}\label{eq:Hom-V-otimes}
\Hom(V \otimes L(p-2,1) , L(0,2p-3))
=\Hom(V, L(1,p-2)\otimes L(0,p-3)\otimes (E^\ast)^{[1]}).
\end{equation}
We shall show that the right hand side in the above is equal to zero
and thus arrive at a contradiction.

By Lemma \ref{Lemma3} of Section \ref{decomp}, the tensor product
$L(1,p-2)\otimes L(0,p-2)$ decomposes as a direct sum of
indecomposable tilting modules labelled by highest weights in alcoves
$1,2,3,4,4'$ along with a number of $p$-restricted simple modules.
(The proof of Lemma \ref{Lemma3} is independent of this subsection.)
We have constructed injections of all of these modules into modules of
the form $T(2(p-1)\rho+w_0\lambda)$ for $\lambda\in X_1$ and therefore
by Donkin's tilting tensor product theorem none of the summands of $
(L(1,p-2)\otimes L(0,p-2))\otimes L(0,1)^{[1]}$ have a $p$-restricted
simple module in their socle.  Therefore, the right hand side of
\eqref{eq:Hom-V-otimes} is zero, as claimed.  Therefore $N_1\neq
T(p,0)$.

This leaves us in the case that $N_1=0$, and it remains to deduce the
structure of $N_2$.  We know that $N_2$ is a self-dual quotient of
$P'(2)$ and therefore has head and socle isomorphic to $L(2)$.
Therefore $N_2$ appears in the list of modules in Figure
\ref{lotsapics}.  There are only two modules in Figure \ref{lotsapics}
with the correct character: namely the third and fourth. But only the
former module is contravariantly self-dual, so we conclude that it is
$N_2$. This shows that $N_2 = M(p-3,1)$.  Therefore $T(2p-2,p-2)$ does
possess the required third filtration.  One may conclude that the
desired filtration exists for any $T(\lambda)$ such that $\lambda \in
C_4$, by Jantzen's translation principle.  This completes the
proof.
\end{proof}
 
\begin{rmk}
  The proof of Lemma \ref{LEMER} shows that the module $M(\nu)$ for
  $\nu \in C_2$ sometimes appears as a direct summand of some tensor
  product of the form $L(\lambda)\otimes L(\mu)$ for $\lambda,\mu \in
  X_1$.  We shall see later that this is the case for all $\nu \in
  C_2$ and that these are the only non-simple, non-tilting
  indecomposable modules which can appear as a direct summand in such
  a tensor product.
\end{rmk}

\begin{prop}\label{T(4)}
  Let $\pi = \{1,2,3,3',4\}$. The basic algebra of the Schur algebra
  $S=S(\pi) = S(\le 4)$ is isomorphic to the path algebra of $Q$
  modulo the ideal generated by the following relations:
\begin{center}
$ c_i' c_i =d' d = 0,  c_i' b_i=d' a,
  d' a b_i' = d' a a' = 0,
 a a' a = 0,
a b_1'=-d c_1', a b_2'=d c_2', 
b_i b_i'=0, $ \\
$b_1 a'=   c_1 d',
 b_1 b_2'= c_1 c_2',     
   b_2 a'=-c_2 d',   
 b_2 b_1'=-c_2 c_1', 
 b_1' b_1+b_2' b_2=a' a,  
b_i' c_i=a' d, $
 \end{center}
where $i\neq j$.
\end{prop}

\begin{proof}
By Proposition \ref{prop:BH-reciprocity}, the projective indecomposable
modules for $S$ have the following $\Delta$-factors (going downwards)
$$
\begin{array}{ccc}
P(1) 		&& \Delta(1) | \Delta(2) | \Delta(4)\\
P(2) 		&& \Delta(2) | \Delta(3)\oplus \Delta(3') | \Delta(4) \\
P(3) 		&& \Delta(3) |  \Delta(4) \\
P(3') 		&& \Delta(3') | \Delta(4) \\
P(4) 		&&  \Delta(4) .
\end{array}
$$ 
The structure of $P(4)=\Delta(4)$ ensures that the relations $ c_i'
c_i= d' d =0, d' a b_i' = d' a a' = 0,$ $ d' a b_i' = d' a a' = 0$ all
hold.  We also see that $ c_i' b_i$, $d' a$ are all equal up to scalar
multiplication, and we may choose to take these coefficients to be
equal to 1.

The tilting module $T(6)$ is projective for the generalised Schur
algebra $S(\sigma)$ for $\sigma=\{1,2,3,3',4,6\}$, and its structure
has already been calculated.  We let $e_\pi$ denote the idempotent
corresponding to the subset of weights $\pi\subset \rho$ in which we
are interested. Applying the idempotent truncation map we see that
$e_\pi T(6) = P(1)$. Hence $P(1)$ has the following coefficient quiver
  $$
  \begin{minipage}{34mm}
   \def\objectstyle{\scriptstyle}\xymatrix@=6pt{
      &		&1 \ar@{-}[dl]	 \ar@{-}[dr]		&\\
      &2\ar@{-}[dl]\ar@{-}[dr]\ar@{-}[drrr]		&  	&4\ar@{-}[dr]\ar@{-}[dl]\ar@{-}[d] \\
1      &		&3		&1 &3' \\
      & 	&	&2\ar@{-}[ur]\ar@{-}[ul]\ar@{-}[u] 	   }
  \end{minipage}
  $$
Therefore
$a a' a = 0$, 
$a b_1'= \alpha_1d c_1', a b_2'= \alpha_2d c_2'$ where the 
$\alpha_i \in K$ are non-zero constants which are yet to be 
determined.

The coefficients we need to understand the structure of $P(3)$ are
$\beta_1,\beta_2,\beta_3 \in K$ where $b_1b_1'=\beta_1c_1c_1'$,
$b_1a'=\beta_2c_1d'$, $b_1b_2' = \beta_3 c_1c_2'$.  We have seen in
Lemma \ref{LEMER} that there exists a uniserial module of the form
$[3,4,3]$.  Therefore this module occurs as a quotient of the
projective $P(3)$, this implies that $\beta_1=0$.  By the structure of
$P(1)$, we know that there does not exist a uniserial module of the
form $[1,4,3]$ and therefore neither does there exist a module of the
form $[3,4,1]$; therefore $\beta_2 \in K$ is a non-zero constant that
is yet to be determined.

We now show that $\beta_3 \neq 0$.  The module $P(3)$ has two quotient
modules with socle $L(2)=\soc(T(4))$, namely $[3,2]$ and $P(2)$
itself, and $\dim_K \Hom(P(3),T(4)) = 2$ by Proposition \ref{donkin}.
Therefore $P(2)$ embeds into $T(4)$.  In what follows we shall
identify a simple composition factor of a projective module with the
path in the quiver which terminates at the given simple composition
factor.  The injection $f_1$ takes the simple head of $P(3)$ which is
labelled by the element $e_{3}$ to the simple composition factor
$L(3)$ in the second radical layer of $P(2)$ labelled by the path
$b_1'$.  Therefore $f_1$ takes the simple composition factor $L(2)$ in
the second radical layer of $P(3)$ labelled by $b_1$ to the simple
composition factor $L(2)$ in the third radical layer of $P(2)$
labelled by $b_1'b_1$.  The module $P(2)=T(4)$ is contravariantly
self-dual and so each copy of $L(2)$ in the third layer must extend at
least one of the $L(3)$ and $L(3')$ in the fourth layer.  We therefore
deduce that the simple composition factor $L(2)$ in the second Loewy
layer of $P(3)$ labelled by the path $b_1$ generates a submodule of
$P(3)$ with either an $L(3)$ or $L(3')$ as a composition factor.  We
have already seen that $L(3')$ is not a composition factor of this
module, therefore we conclude that $L(3)$ is a composition factor and
hence $\beta_3 \neq 0$.

Dual arguments hold for all but one of the above statements (allowing
us to make conclusions about $P(3')$ and the corresponding
coefficients $\gamma_1,\gamma_2,\gamma_3\in K$).  The statement which
has no dual comes from the fact that we have not constructed a
uniserial module $[3',4,3']$, i.e. we do not know if there does or
does not exist a uniserial module of the form $[3',4,3']$.  Therefore
the projective modules $P(3)$ and $P(3')$ have the following
coefficient quivers (where the extension corresponding to the dashed
line may or may not exist),
$$
  \begin{minipage}{34mm}
   \def\objectstyle{\scriptstyle}\xymatrix@=6pt{
      &		&3 \ar@{-}[dl]	 \ar@{-}[dr]		&\\
      &2 \ar@{-}[dr]\ar@{-}[drr]		&  	&4\ar@{-}[dr]\ar@{-}[dl]\ar@{-}[d] \\
       &		&3'		&1 &3  \\
      & 	&	&2\ar@{-}[ur]\ar@{-}[ul]\ar@{-}[u] 	   }
  \end{minipage}
  \quad
    \begin{minipage}{34mm}
   \def\objectstyle{\scriptstyle}\xymatrix@=6pt{
      &		&3' \ar@{-}[dl]	 \ar@{-}[dr]		&\\
      &2 \ar@{..}[drrr]\ \ar@{-}[dr]\ar@{-}[drr]		&  	&4\ar@{-}[dr]\ar@{-}[dl]\ar@{-}[d] \\
       &		&3 		&1 &3'  \\
      & 	&	&2\ar@{-}[ur]\ar@{-}[ul]\ar@{-}[u] 	   }
  \end{minipage}
$$ 
where the non-zero coefficients $\beta_2, \beta_3, \gamma_2, \gamma_3
$ are yet to be determined. The coefficient $\gamma_1$, corresponding
to the dashed line, will later be shown to be equal to zero.
 
We now consider the final projective module $P(2)=T(4)$.  A
$\nabla$-filtration of the module $T(4)$ has $\nabla(4)$ at the top,
and so the $b_ic_i'$, $a'd$ are non-zero and span a 1-dimensional
space.  We may pick the corresponding coefficients to be equal to 1.

Arguing as in the proof of Proposition \ref{T4}, we conclude that
$a'a=\zeta_1b_1'b_1+\zeta_2 b_2'b_2$.  This is the final path of
length two from vertex $2$ to itself. At this point, we make a
non-trivial choice by setting $\zeta_1=\zeta_2=1$, it is this choice
that determines the remaining coefficients.  In particular, the
quivers of all the self-dual proper quotient of $P(2)$ are given in
Lemma \ref{LEMER}.
 
Now consider the coefficients for $P(1)$.  There is no submodule of
$P(1)$ that is isomorphic to $M(2)$.  This can easily be seen as
$aa'a=0$.  Therefore the submodule of $P(1)$ generated by the simple
module $L(2)$ labelled by the path $a$ is isomorphic to the module
$P'(2)/\langle b_1'b_1+b_2'b_2 \rangle$ depicted in Figure
\ref{lotsapics}.  By our assumption on the coefficients above, this
implies that $ab_1'b_1=(-1)ab_2'b_2$.  This implies that
$\alpha_1=-\alpha_2$.

We now consider the submodule of $T(4)$ generated by the copy of
$L(2)$ in the third Loewy layer of $T(4)$ labelled by the path
$b_1'b_1-b_2'b_2$ (respectively $b_1'b_1+b_2'b_2$).  We have already
seen in Lemma \ref{LEMER} that $T(4)$ has a filtration
$[M(2),L(4),M(2)]$; we have chosen our coefficients so that the
submodule isomorphic to $M(2)$ is generated by the copy of $L(2)$
labelled by $b_1'b_1-b_2'b_2$.  This submodule  
has basis $\mathcal{B}=\{b_1'b_1-b_2'b_2,a'dc_1',a'dc_2',a'dd',a'dd'a\}$ with coefficient quiver
\begin{align*}
  N_{b_1',\mathcal{B}}(b_1'b_1-b_2'b_2, a'dc_1')&=-\gamma_1 \\
 N_{a',\mathcal{B}}(b_1'b_1-b_2'b_2, a'da')&=\beta_2-\gamma_2\\
 N_{b_2',\mathcal{B}}(b_1'b_1-b_2'b_2, a'dc_2')&=\beta_3-\gamma_3.
\end{align*}
We  deduce that $-\gamma_1=\beta_3-\gamma_3=\sigma$
and $\beta_2-\gamma_2= 2\sigma$ for some choice of $\sigma \in K$.

We now study the submodule generated by the copy of $L(2)$ labelled by
$b_1'b_1+b_2'b_2$; to do this we study the image of a homomorphism
from $P(1)$ to $P(2)=T(4)$.  The homomorphism in which we are
interested is an injection of $P(1)/\langle aa' \rangle$ into
$T(4)$.  This takes the simple head of $P(1)$ labelled by the path
$e_1$ to the copy of $L(1)$ in the second Loewy layer of $P(2)$
labelled by $a'$.  It hence takes the simple composition factor $L(2)$
in the second radical layer of $P(1)$ labelled by the path $a$ to the
simple composition factor $L(2)$ in the third radical layer of $P(2)$
labelled by the path $a'a=b_1'b_1+b_2'b_2$.  From the structure of
$P(1)$ we deduce that the simple composition factor $L(2)$ in the
third radical layer of $P(1)$ labelled by the path $b_1'b_1+b_2'b_2$
generates a submodule as in the rightmost diagram in Figure
\ref{lotsapics}.  Therefore $\beta_2 +\gamma_2=0$ and and we may
choose the coefficients so that
$ \rho= ( \beta_1+\gamma_1)
= - (\beta_3 +\gamma_3)$ for some non-zero constant $\rho \in K$.

The unique solution to this set of relations is $\beta_1=0,
\beta_2=\beta_3=k$ and $\gamma_1=\gamma_2=-k$, $\gamma_3=0$ for $k=
(\sigma+\rho)/2$.  We may now fix $k=1$ and we are done.
\end{proof}

\begin{rmk}
  The authors have used the``Quivers and Path Algebras'' package
  \cite{QPA} for GAP \cite{GAP} to verify that the algebra defined by
  the quiver and relations of Proposition \ref{T(4)} has the correct
  dimension and that the projective modules have the correct dimension
  and Loewy series structure. 
\end{rmk}

We wish to consider the possible bases of coefficient quivers for the
tilting module $T(4)$.  The only choice to be made is which basis to
take for the 2-dimensional space $L(2)\oplus L(2)$ in the third Loewy
layer.
The most obvious choice of basis is given by $$\mathcal{B}=\{e_2,
b_1', a', b_2', b_1'b_1,a'd,b_2'b_2, a'dc_2', a'dd', a'dc_1',a'dd'a
\}$$ with respect to this basis the coefficient quiver is the leftmost
quiver in Figure \ref{2quivsfort4}.  The non-trivial coefficients in
this coefficient quiver are given by
$N_{b_1',\mathcal{B}}(b_2'b_2,a'dc_1') = -1$ and
$N_{a',\mathcal{B}}(b_2'b_2,a'dd')=-1$.  
Here we have taken
$N=\{b_1'b_1,b_2'b_2\}$ as the basis for the 
2-dimensional space $L(2)\oplus L(2)$ in the third Loewy layer.

 An alternative basis for the coefficient quiver is given by substituting 
$N'=\{b_1'b_1+b_2'b_2,b_1'b_1-b_2'b_2\}$ as the basis for the 
2-dimensional space $L(2)\oplus L(2)$ in the third Loewy layer.
 This  is depicted
in the rightmost diagram in Figure \ref{2quivsfort4}.

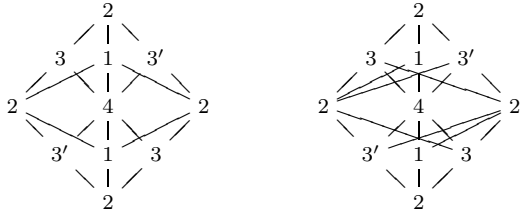
\begin{figure}[ht]
$$
  \begin{minipage}{34mm}
  \def\objectstyle{\scriptstyle}\xymatrix@=6pt{
&&	 					& 2 \ar@{-}[dr] \ar@{-}[dl] 			& 			&\\
&&3\ar@{-}[dl]   	& 1\ar@{-}[u] \ar@{-}[d]  \ar@{-}[dll] 	  \ar@{-}[drr] 	 					&3'   \ar@{-}[dr]	& \\
& 2  &			& 4  \ar@{-}[dr] \ar@{-}[dl] \ar@{-}[ur] \ar@{-}[ul] \ar@{-}[u]		& 	& 2\\
&&3'  \ar@{-}[ul]  			& 1\ar@{-}[ull]  \ar@{-}[u]  \ar@{-}[d]  \ar@{-}[urr]  	&3\ar@{-}[ur] \\
&&			& 2 \ar@{-}[ur] \ar@{-}[ul] 		&\\
 } \end{minipage} 
 \quad \quad
   \begin{minipage}{34mm}
  \def\objectstyle{\scriptstyle}\xymatrix@=6pt{
&&	 					& 2 \ar@{-}[dr] \ar@{-}[dl] 			& 			&\\
&&3\ar@{-}[dl]  \ar@{-}[drrr] 	& 1\ar@{-}[u] \ar@{-}[d]  \ar@{-}[dll] 	 					&3'  \ar@{-}[dlll] \ar@{-}[dr]	& \\
& 2 \ar@{-}[drrr] &			& 4  \ar@{-}[dr] \ar@{-}[dl] \ar@{-}[ur] \ar@{-}[ul] \ar@{-}[u]		& 	& 2\\
&&3'  \ar@{-}[ul] \ar@{-}[urrr]  			& 1\ar@{-}[u]  \ar@{-}[d]  \ar@{-}[urr]  	&3\ar@{-}[ur] \\
&&			& 2 \ar@{-}[ur] \ar@{-}[ul] 		&\\
 } \end{minipage} 
 $$
 \caption{Two coefficient quivers for the module $T(4)$. } \label{2quivsfort4}
\end{figure}

\subsection{} 
By \cite[Proposition 4.2]{Jensen}, the $p$-regular linkage class of
$E\otimes T(6|8)$ is the module $T(8)$.  Arguing as we did for the
second filtration in the proof of Lemma \ref{LEMER}, the head of
$T(8)$ can be seen to be $L(2) \oplus L(4)$.

We know that $\Delta(3)$ must extend $\Delta(2)$ by Proposition
\ref{donkin} and therefore these subquotients are correctly placed
within the diagram in Theorem \ref{thm:2}(b).  We know the character
of $P(4)$ by Proposition \ref{prop:BH-reciprocity}.  We now consider
$\Hom_{S(\le 8)}(P(4),T(8))$. Since $P(4)$ has a $\Delta$-filtration
and $T(8)$ has a $\nabla$-filtration we may apply Proposition
\ref{donkin} to see that $\dim_K\Hom_{S(\le 8)}(P(4),T(8)) = 4$.  This
allows us to place the other Weyl modules within the structure diagram
of $T(8)$ using Proposition \ref{BGG}.

\subsection{}\label{105}  
It follows from \ref{Jantzen-iso} that $T(5)$ and $T(7)$ are
projective-injective for suitable generalised Schur algebras.  The
projectivity of $T(9)$ can be seen by appealing to Proposition
\ref{prop:BH-reciprocity}.  Therefore Proposition \ref{BGG} gives the
Loewy structures of $T(5)$, $T(7)$ and $T(9)$ and proves that they are
rigid, as claimed in Theorem \ref{thm:2}.

Since the radical layers of these tilting modules are not
multiplicity-free, determining their full structure would be quite
complicated by these methods, so we do not pursue this further.

\section{Restricted tensor product decompositions: 
one or both factors tilting}\label{decomp}\noindent
We now turn to the proof of Theorem \ref{thm:1}, which is split over
this section and the next.  We need to show that each indecomposable
direct summand in a decomposition of a tensor product of two
$p$-restricted simple modules must have one of the the following
forms:
\medskip

  (a) a tilting module of highest weight $\lambda$ such that $\lambda
\le (2p-2)\rho$;

  (b) a simple module (which is not tilting) of highest weight in
  $C_2$; 

  (c) a module of the form $M(\lambda)$, for $\lambda \in C_2$.

\medskip\noindent
By highest weight considerations, it is easy to see that each of these
possibilities actually occurs in some restricted tensor product, hence
the above list provides a complete description of the isomorphism
classes of indecomposable summands of restricted tensor products for
$G=\SL_3$ (for $p\ge 5$). This will prove Theorem \ref{thm:1}.

We will see that there is an algorithm for the computation of the
multiplicities of the indecomposable direct summands of any
$p$-restricted tensor product.  In this section we freely switch
between alcove and highest weight notation depending on our
needs. Highest weights will usually be written using $\SL_3$ notation
but we shall sometimes find it convenient to use $\GL_3$ weight
notation in certain calculations; our conventions for such transitions
are laid out in \ref{ss:8:notation}.

Consider the set of $p$-restricted simple modules $L(\lambda)$ for
$\SL_3$.  If $\lambda \notin C_2$ then $L(\lambda)$ is tilting;
otherwise not.  These two cases therefore guide the calculation.  The
present section considers the indecomposable direct summands of
$L(\lambda) \otimes L(\mu)$ in case one or both of the factors is
tilting. The more difficult case, in which both factors are not
tilting, is considered in the next section.

\subsection{} 
For convenience, we work in the representation ring $\mathcal{R} =
\Rep_k(\SL_3)$, which is the quotient of the free abelian group on the
set $[L(\lambda)]$, as $\lambda$ varies over $X^+$, by the subgroup
generated by all expressions of the form $[M]-[M']-[M'']$ such that $0
\to M' \to M \to M'' \to 0$ is a short exact sequence of
finite-dimensional $G$-modules.

We have a ring homomorphism from $\mathcal{R}$ into the character ring
$\Z[X]^W$, for either case $X=X(T_{\GL_3})$ or $X=X(T_{\SL_3})$,
defined by sending $[M]$ for any module $M$ to its formal character
$\ch M \in \Z[X]^W$. This homomorphism is injective; i.e., $\ch M =
\ch N$ implies $[M]=[N]$ for any finite-dimensional modules $M,N$.
From highest weight considerations we know that any of the sets 
\[
\{[L(\lambda)] \colon \lambda \in X^+ \}, \quad\{ [\Delta(\lambda)]
\colon \lambda \in X^+ \}, \quad\{ [\nabla(\lambda)] \colon \lambda \in
X^+ \}, \quad \{ [T(\lambda)] \colon \lambda \in X^+ \}
\]
is a $\Z$-basis for $\mathcal{R}$. By highest weight theory, if $M$ is
a highest weight module of highest weight $\lambda$, then in
$\mathcal{R}$ we have $[M] = \sum_{\mu \le \lambda} m_\mu [L(\mu)]$.

\subsection{Both factors are tilting}\label{dddd}  
Since the tensor product of two tilting modules is tilting, any tensor
product $L(\lambda) \otimes L(\mu)$ of two $p$-restricted simples such
that $\lambda, \mu \notin C_2$ is tilting, and thus decomposes as a
direct sum of indecomposable tilting modules.  Furthermore, in this
case the modules $L(\lambda) = \Delta(\lambda)$, $L(\mu) =
\Delta(\mu)$ are also Weyl modules, therefore the (non-negative)
coefficients $c^\nu_{\lambda,\mu}$ in the expression
\begin{equation}\label{bt:1}
  [L(\lambda)\otimes L(\mu)] = [\Delta(\lambda)\otimes \Delta(\mu)] 
  = \sum_{\nu \in X^+} c^\nu_{\lambda,\mu} [\Delta(\nu)]
\end{equation}
are determined by the Littlewood--Richardson rule.  We know the
characters of the tilting modules by \cite{Jensen,Parker}; they also
appear implicitly in Section \ref{tilting}.  Thus we know the
coefficients in the expression
\begin{equation}\label{bt:2}
  [T(\nu)] = \sum_{\nu' \in X^+} t_{\nu,\nu'} [\Delta(\nu')].
\end{equation}
Note that $t_{\nu,\nu}=1$ since the highest weight space of any
indecomposable tilting module is known to have dimension 1, and
furthermore $t_{\nu,\nu'} = 0$ unless $\nu' \le \nu$.

This allows us to determine the multiplicities of the indecomposable
direct summands of $L(\lambda) \otimes L(\mu)$ by highest weight
theory, as follows: choose any $\nu$ which is maximal among the set of
all $\nu'$ such that $c^{\nu'}_{\lambda,\mu} \ne 0$ in the finite sum
in the right hand side of \eqref{bt:1}. Then $T(\nu)$ must occur
exactly $c^{\nu}_{\lambda,\mu}$ times as a direct summand of
$L(\lambda) \otimes L(\mu)$. Thus we subtract $c^{\nu}_{\lambda,\mu}
[T(\nu)] = \sum_{\nu' \in X^+} c^{\nu}_{\lambda,\mu} \, t_{\nu,\nu'}
[\Delta(\nu')]$ from the expression in the right hand side of
\eqref{bt:1}, and repeat the procedure on the difference. The process
terminates when the expression becomes zero, and termination after a
finite number of such steps is guaranteed.

\begin{example}
Let $p=5$ and consider $L(1,1) \otimes L(4,0)$. By applying the Pieri
rule for $\GL_3$ to the pair of partitions $((2,1,0))$ and $((4))$ and
restricting to $\SL_3$ (see \ref{ss:8:notation} for the notation and
conventions) we see that $[L(1,1) \otimes L(4,0)] = [\Delta(1,1)
  \otimes \Delta(4,0)] = [\Delta(5,1)] + [\Delta(3,2)] + [\Delta(4,0)]
+ [\Delta(2,1)]$.  From the known $\Delta$-filtration multiplicities
of the tilting modules, it follows that
\[
  L(1,1) \otimes L(4,0) \simeq T(5,1) \oplus T(4,0) \oplus T(2,1)
\]
as $[T(5,1) = [\Delta(5,1)]+[\Delta(3,2)]$, $[T(4,0)]=[\Delta(4,0)]$,
and $[T(2,1)]=[\Delta(2,1)]$.
\end{example}

\subsection{Only one factor is tilting}\label{c1c2}
We now consider the tensor product of $L(\lambda)$ for $\lambda \in
C_2$ with any $p$-restricted simple tilting module $L(\mu)$.  Thus,
$\mu$ is a $p$-restricted dominant weight belonging to $C_1$ or one of
the walls $\mathcal{F}_{1|2}$, $\mathcal{F}_{2|3}$,
$\mathcal{F}_{2|3'}$ of alcove $C_2$ (see Figure \ref{fig:1}), or
$L(\mu)=\St$ is the Steinberg module.

Since $L(\mu)$ is tilting, highest weight theory guarantees that it is
isomorphic to a direct summand of the tilting module $E^{\otimes
  \mu_1}\otimes (E^\ast)^{\otimes \mu_2}$, where $\mu = (\mu_1,
\mu_2)$.  Therefore we consider the tensor products $L(\lambda)\otimes
E$ and $L(\lambda) \otimes E^\ast$.

The following describes the Weyl filtration of a Weyl module tensored
by $E$ or $E^\ast$.

\begin{lem}
  Let $\lambda \in X^+$ be any dominant weight.  Recall that
  $\overline{\varepsilon}_j$ for $j=1,2,3$ are the weights of $E$. The
  weights of $E^\ast$ are $-\overline{\varepsilon}_j$ for
  $j=1,2,3$. In the representation ring $\mathcal{R}$ we have:

  (a)\quad $[\Delta(\lambda) \otimes E] = \sum_{j=1}^3
  [\Delta(\lambda+\overline{\varepsilon}_j)]$;

  (b)\quad $[\Delta(\lambda) \otimes E^\ast ]= \sum_{j=1}^3
  [\Delta(\lambda-\overline{\varepsilon}_j)]$

\noindent
with the stipulation that in the right hand side of each equality, we
omit any summand $[\Delta(\lambda \pm \overline{\varepsilon}_j)]$ of
non-dominant highest weight $\lambda \pm \overline{\varepsilon}_j$.
\end{lem}

\begin{proof}
  Let $\lambda=(\lambda_1, \lambda_2)$. We regard $\lambda$ as arising
  from the partition $((\lambda_1+\lambda_2, \lambda_2, 0))$ written
  as a $\GL_3$-weight with three components for the sake of
  emphasis. Tensoring by $E$ we apply the Pieri rule to get the
  decomposition with $\Delta$-factors of highest $\GL_3$-weight the
  partitions $((\lambda_1+\lambda_2+1, \lambda_2, 0))$,
  $((\lambda_1+\lambda_2, \lambda_2+1, 0))$, and
  $((\lambda_1+\lambda_2, \lambda_2, 1))$ except the last one does not
  occur if $\lambda_2=0$ and the second one doesn't appear if
  $\lambda_1=0$. Part (a) then follows by passing to $\SL_3$-weight
  notation.

  Given (a), we can apply it to decompose the tensor product
  $\Delta(-w_0(\lambda))\otimes E$, which gives
  \[
  [\Delta(-w_0(\lambda)) \otimes E] = \textstyle \sum _{j=1}^3
  [\Delta(-w_0(\lambda)+\overline{\varepsilon}_j)]
  \]
  with the stated stipulation.  Now formula (b) follows after applying
  the symmetry involution $-w_0$ to the weights of the above
  decomposition. Note that $-w_0(\overline{\varepsilon}_j) =
  -\overline{\varepsilon}_{4-j}$ for each $j=1,2,3$. Thus
  $-w_0(-w_0(\lambda)+\overline{\varepsilon}_j) = \lambda -
  \overline{\varepsilon}_{4-j}$ for each $j$, and (b) follows.
\end{proof}

\begin{lem}\label{2}
  For any $\lambda \in C_2$ we have the decompositions
  
  (a)\quad $L(\lambda) \otimes E \simeq 
  \begin{cases}
    L(\lambda+\overline{\varepsilon}_1) \oplus
    L(\lambda+\overline{\varepsilon}_2) \oplus
    L(\lambda+\overline{\varepsilon}_3), &\text{if }
    \lambda+\overline{\varepsilon}_3 \notin \mathcal{F}_{1|2}\\
    L(\lambda+\overline{\varepsilon}_1) \oplus
    L(\lambda+\overline{\varepsilon}_2)
    &\text{if } \lambda+\overline{\varepsilon}_3 \in
    \mathcal{F}_{1|2}.
  \end{cases}$

  (b)\quad $L(\lambda) \otimes E^\ast \simeq
\begin{cases}
    L(\lambda-\overline{\varepsilon}_1) \oplus
    L(\lambda-\overline{\varepsilon}_2) \oplus
    L(\lambda-\overline{\varepsilon}_3), &\text{if }
    \lambda-\overline{\varepsilon}_1 \notin \mathcal{F}_{1|2}\\
    L(\lambda-\overline{\varepsilon}_2) \oplus
    L(\lambda-\overline{\varepsilon}_3)
    &\text{if } \lambda-\overline{\varepsilon}_1 \in
    \mathcal{F}_{1|2}.
  \end{cases}$
\end{lem}

\begin{proof}
  For any $\lambda \in C_2$ we have $[L(\lambda)] = [\Delta(\lambda)]
  - [\Delta(s_{1|2}\cdot \lambda)]$. Thus we can compute $[L(\lambda)
  \otimes E]$ and $[L(\lambda)\otimes E^\ast]$ by the preceding
  lemma. The stated decompositions now follow from the linkage
  principle, which guarantees that the simple constituents of the
  tensor products cannot extend one another.
\end{proof}

We remark that the lemma holds for all primes $p \ge 3$, although we
will need it only in characteristics $p\ge 5$.

Examining the direct summands on the right hand side of either
decomposition (a) or (b) in Lemma \ref{2}, we observe (since $p\ge 5$)
that at most one of them can be tilting. In case (a) this happens if
and only if $\lambda+\overline{\varepsilon}_1 \in \mathcal{F}_{2|3}$
or $\lambda+\overline{\varepsilon}_2 \in \mathcal{F}_{2|3'}$, and in
case (b) it happens if and only if $\lambda-\overline{\varepsilon}_2
\in \mathcal{F}_{2|3}$ or $\lambda-\overline{\varepsilon}_3 \in
\mathcal{F}_{2|3'}$.  Furthermore, all the non-tilting direct summands
are of highest weight belonging to the alcove $C_2$.  So if we now
tensor by another $E$ or $E^\ast$ then, applying Lemma \ref{2} again
to the non-tilting summands, we see that the module can be written as a
direct sum of simple modules of highest weight in $C_2$, with one
additional summand which is either a tilting module or zero.

By induction on $\mu_1$ and $\mu_2$ it follows that the tensor product
$L(\lambda) \otimes E^{\mu_1} \otimes (E^\ast)^{\mu_2}$ can be
decomposed into a direct sum of simple modules of highest weight
belonging to $C_2$, modulo tilting summands. Since $L=L(\mu)$ is a
direct summand of $E^{\mu_1} \otimes (E^\ast)^{\mu_2}$, the module
$L(\lambda) \otimes L(\mu)$ is a direct summand of $L(\lambda) \otimes
E^{\mu_1} \otimes (E^\ast)^{\mu_2}$, and it follows that $L(\lambda)
\otimes L(\mu)$ is isomorphic to a direct sum of simple modules of
highest weight belonging to $C_2$, modulo tilting summands. Let us
record these observations.

\begin{lem}\label{Lemma3}
  For any $\lambda \in C_2$ and any $p$-restricted $\mu=(\mu_1,
  \mu_2)$ such that $L(\mu)=T(\mu)$ we have:

  (a) $L(\lambda)\otimes E^{\otimes \mu_1} \otimes (E^\ast)^{\mu_2}$
  is a direct sum of simple modules of highest weight in $C_2$ modulo
  tilting summands.

  (b) The same statement applies to $L(\lambda)\otimes L(\mu)$.
\end{lem}

Next we need to analyze the highest weights of the indecomposable
tilting summands that can occur. The main point is that they all have
highest weight some $\nu$ such that $\nu \notin C_1$. 

\begin{lem}\label{Lemma4}
  If $T(\nu)$ is an indecomposable tilting summand of the tensor
  product in (a) or (b) of the preceding lemma then $\nu \notin C_1$.
\end{lem}

\begin{proof}
  By induction it is enough to consider $T(\lambda)\otimes E$ and
  $T(\lambda)\otimes E^\ast$.  Considering the characters of the
  modules $E$ and $E^\ast$ it follows that if $\lambda \in X^+$ then
  \begin{equation}\label{eq:linkage-classes}
  T(\lambda) \otimes E =
  \sum_{j=1}^3 \mathrm{pr}_{\lambda+\overline{\varepsilon}_j}
  (T(\lambda) \otimes E); \quad T(\lambda) \otimes E^\ast =
  \sum_{j=1}^3 \mathrm{pr}_{\lambda-\overline{\varepsilon}_j}
  (T(\lambda) \otimes E^\ast) .
  \end{equation}
  It should be noted that the above sums are not always direct, as it
  can happen that two or more summands coincide. However, by
  definition of the functor $\mathrm{pr}_\mu$ it follows that two
  summands must be equal whenever they have non-trivial intersection.
 
  Now assume that $\lambda$ is a dominant weight not in alcove
  1. There are three cases to consider: (1) either $\lambda$ is a
  vertex (intersection point of two walls), (2) $\lambda$ is a weight
  on a wall which is not a vertex, or (3) $\lambda$ is a $p$-regular
  weight (and hence lies in the interior of an alcove). These cases
  are clearly mutually exclusive.

  If $\lambda$ is a vertex, then $T(\lambda)\otimes E$ and
  $T(\lambda)\otimes E^\ast$ have no indecomposable tilting summands
  of $p$-regular highest weight. Hence there can be no tilting
  summands of highest weight in alcove 1.

  If $\lambda$ is on a wall but is not a vertex, then Theorem 4.2 of
  \cite{Jensen} shows that the unique $p$-regular tilting summand of
  either tensor product $T(\lambda) \otimes E$ or $T(\lambda) \otimes
  E^\ast$ is indecomposable. Its highest weight is $\lambda+
  \overline{\varepsilon}_1$ and $\lambda- \overline{\varepsilon}_3$
  respectively, and thus cannot lie in alcove 1.

  Finally, suppose that $\lambda$ lies in the interior of its
  alcove. If $\lambda \pm \overline{\varepsilon}_j$ is $p$-regular
  then it lies in the interior of the same alcove.  By
  \eqref{eq:linkage-classes} and the translation principle, in this
  case $\mathrm{pr}_{\lambda + \overline{\varepsilon}_j}
  (T(\lambda)\otimes E) = T(\lambda + \overline{\varepsilon}_j)$ and
  $\mathrm{pr}_{\lambda - \overline{\varepsilon}_j} (T(\lambda)\otimes
  E^\ast) = T(\lambda - \overline{\varepsilon}_j)$ and neither highest
  weight $\lambda \pm \overline{\varepsilon}_j$ lies in alcove 1. 

  Note that whenever $\lambda \pm \overline{\varepsilon}_j$ is
  $p$-singular its corresponding linkage component in
  \eqref{eq:linkage-classes} cannot produce any tilting module of
  highest weight in alcove 1.
\end{proof}

\subsection{Decomposition algorithm, I}\label{algorithm:1}
Lemma \ref{Lemma4} implies that we can compute the multiplicities of
the indecomposable direct summands of $L(\lambda) \otimes L(\mu)$ by
the following algorithm:
\begin{enumerate}[label=(\alph*)]
\item Express $[L(\lambda) \otimes L(\mu)]$ in terms of the
  $[\Delta(\nu)]$-basis. This can be done using two applications of
  the Littlewood--Richardson rule (as in the proof of Lemma \ref{2})
  as follows
  \begin{equation*} 
  [L(\lambda)\otimes L(\mu)] = [\Delta(\lambda)\otimes \Delta(\mu)] -
  [\Delta(s_{1|2}\cdot \lambda)\otimes \Delta(\mu)] 
  = \sum_{\nu \in X^+} ( c^\nu_{\lambda,\mu}  
  - c^\nu_{s_{1|2}\cdot \lambda,\mu} ) [\Delta(\nu)].
  \end{equation*}
 \item 
  Express $[L(\lambda) \otimes L(\mu)]$ in terms of the
  $[L(\nu)]$-basis; i.e., compute the composition factor
  multiplicities in both filtrations and their difference. 
   This produces an expression of the form 
  \[
  [L(\lambda) \otimes L(\mu)] = \textstyle \sum_\nu d^{\lambda,\mu}_\nu [L(\nu)]
  \]
  in which each $d^{\lambda,\mu}_\nu \ge 0$. 

\item If $\nu \notin C_1 \cup C_2$ is maximal such that
  $d^{\lambda,\mu}_\nu >0$, then subtract $d^{\lambda,\mu}_\nu
  [T(\nu)]$. Repeat on the difference, until there do not exist any
  $\nu \notin C_1 \cup C_2$ appearing in the expression.

\item At this point, only terms of the form $[L(\nu)]$ for $\nu \in
  C_1 \cup C_2$ will remain. So we are dealing with an expression of
  the form $\sum_{\nu \in C_1 \cup C_2} b_\nu [L(\nu)]$.  Each term of
  the form $b_\nu [L(\nu)]$ for $\nu \in C_1$ must, by Lemma
  \ref{Lemma4}, be a composition factor of some tilting module of
  highest weight in $C_2$. Since $[T(2)] = [L(2)] + 2[L(1)]$ it
  follows that each $b_\nu$ for $\nu \in C_1$ is even, and $b_{s_{1|2}
    \cdot \nu} \ge \frac{b_\nu}{2}$. Subtract $\frac{b_\nu}{2}
  [T(s_{1|2} \cdot \nu)]$ from the expression for each such $\nu \in
  C_1$. The remaining expression is a linear combination of various
  $[L(\nu)]$ for $\nu \in C_2$, and each of these simples appears as a
  summand of the decomposition according to its multiplicity.
\end{enumerate}

To summarise step (d): we have shown that for $\nu \in C_1$
representing a given linkage class, the multiplicity of
$T(s_{1|2}\cdot \nu)$ in $L(\lambda) \otimes L(\mu)$ is
$\frac{b_\nu}{2}$ and the multiplicity of $L( \nu)$ is $b_{s_{1|2}
  \cdot \nu} - \frac{b_\nu}{2}$.  In other words, the multiplicities
of $T(s_{1|2}\cdot \nu)$ and $L( \nu)$ in the tensor product are given
by the matrix product
\[
\left(\begin{array}{rl}1/{2} & 0 \\ - {1}/{2}  & 1\end{array}\right)
\left(\begin{array}{c}b_\nu \\ b_{s_{1|2}\cdot \nu} \end{array}\right).
\]
We note that the above matrix is simply the inverse of 
\[
\left(\begin{array}{cc}2 & 0 \\1 & 1\end{array}\right),
\]
the change of basis matrix expressing the basis $\{[T(2)],[L(2)]\}$ in
terms of the basis $\{ [L(1)],$ $[L(2)] \}$.  A similar issue arises
in Section \ref{OHMYGOD}.
 
\begin{eg} 
Suppose the characteristic is $p=5$.  Take the tensor product $L(2,2)
\otimes L(1,1)$.  The Littlewood--Richardson rule gives the character
\begin{center}
$[L(3,3)] + [L(1,4 )]+[L(4,1)]+[L(2,2)],$
\end{center}
and this can be seen to give a direct sum decomposition by the linkage
principle.
\end{eg}

\section{Restricted tensor product decompositions: 
neither factor is tilting}\label{sec:c2c2}\noindent 
It remains to describe the indecomposable direct summands of
restricted tensor products $L(\lambda) \otimes L(\mu)$ in which
neither factor is tilting (i.e., both $\lambda, \mu \in C_2$).
Analysis of this remaining case will complete the proof of Theorem
\ref{thm:1}.

\subsection{}\label{c2c2}
To do this we will need to consider the set of weights in $C_2$ along
the strip
\[
  \{\nu \in C_2: \bil{(\alpha_1+\alpha_2)^\vee}{\nu+\rho}=p-1\}
\] 
an example of which is pictured in Figure \ref{trace}. We refer to
this set as the set of \emph{minimal} weights in $C_2$.  We first
consider a tensor product of the form $L(\sigma)\otimes L(\tau)$,
where $\sigma$ is the unique minimal weight of the form
$\lambda-s\overline{\varepsilon}_1$ and $\tau$ the unique minimal
weight of the form $\mu-t\overline{\varepsilon}_1$.  It follows from
Lemma \ref{Lemma3} that $L(\lambda)$ and $L(\mu)$ are direct summands
of $E^{\otimes s} \otimes L(\sigma)$ and $E^{\otimes t} \otimes
L(\tau)$, respectively.  Therefore $L(\lambda)\otimes L(\mu)$ is a
direct summand of
\[
  E^{\otimes {(s+t)}} \otimes L(\sigma) \otimes L(\tau).
\]
This will allow us to prove results for the decomposition of arbitrary
tensor products of the form $L(\lambda) \otimes L(\mu)$ for
$\lambda,\mu \in C_2$ by induction.

\begin{eg}
For example, take $p=7$, $\lambda=(3,5)$ and $\mu=(5,4)$.  In this
case $s=2$, $t=3$ and so $\sigma=(1,5)$ and $\tau=(2,4)$.  Therefore,
$L(\lambda)\otimes L(\mu)$ is a direct summand of
\[
  E^{\otimes 5} \otimes L(1,5)\otimes L(2,4).  
\]
One can see this in Figure \ref{trace}.
\begin{figure}[h]
\begin{center}
\begin{tikzpicture}[scale=0.7]
  \path (0,0) coordinate (origin);
  \path (-60:7cm) coordinate (A1);
  \path (240:7cm) coordinate (A2);
    \foreach \i in {1,...,7}
  {
    \path (origin)++(-60:1*\i cm)  coordinate (a\i);
    \path (origin)++(240:1*\i cm)  coordinate (b\i);
       \path (A2)++(0:1*\i cm)  coordinate (c\i);
       \path (A1)++(180:1*\i cm)  coordinate (d\i);
     \draw[thin,gray] (a\i) -- (c\i) (a\i) -- (b\i) (b\i)--(d\i); 
  }
  \draw[thick,black] (A1) -- (A2) (origin) -- (A1) (origin) -- (A2);
                                       \draw (d1)++(120:1cm) node {\tiny $ (5,1)$};    \draw (d2)++(120:1cm) node {\tiny $ (4,2)$};
                                       \draw (d3)++(120:1cm) node {\tiny $ (3,3)$};    
                                       \draw (d4)++(120:1cm) node {\tiny $ (2,4)$};
                                       \draw (d5)++(120:1cm) node {\tiny $ (1,5)$};    
                                         \draw (d1)++(115:4cm) node {\small  $\mathbf{ \mu}$};    \draw (d3)++(114:3.07cm) node {\small $ \mathbf{\lambda}$};
\end{tikzpicture}
\end{center}
\caption{The closure of the alcove ${C}_2$, for $p=7$.  The minimal
  weights in $C_2$, $\lambda=(3,5)$ and $\mu=(5,4)$ are at the
  labelled points.}
\label{trace}
\end{figure}
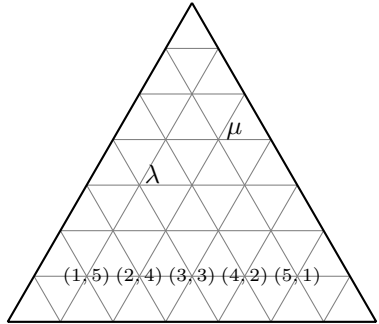 
\end{eg}

\subsection{}\label{bound}  
We will eventually show that $L(\lambda)\otimes L(\mu)$, for
$\lambda,\mu \in C_2$, decomposes as a direct sum of tilting modules
and modules of the form $M(\nu)$ for $\nu \in C_2$.  Sections
\ref{bound} through \ref{844} focus on minimal tensor products.
Section \ref{MtoM} will deal with the general case.

Let $\sigma, \tau$ be any two minimal weights.  There exist $0\le a,b
\le p-3$, such that $\sigma=(p-2-a,a+1)$ and $\tau=(p-2-b,b+1)$.  We
let
\[
\sigma'=
\begin{cases}
s_{2|3}\cdot \sigma \text{ if } a+b \le p-3 \\
s_{2|3'}\cdot \sigma \text{ if } a+b  > p-3
\end{cases}
\text{ and } 
\tau'=
\begin{cases}
s_{2|3}\cdot \tau \text{ if } a+b \le p-3 \\
s_{2|3'}\cdot \tau \text{ if } a+b  > p-3.
\end{cases}
\]
Note that in the case that $0\le a+b \le p-3$, $\sigma'$ and
$\tau'$ are of the form $(p+a,0)$ and $(p+b,0)$, respectively (the
other case is obtained by symmetry).  It is clear from Section
$\ref{sec:Weyl}$ that we get injections
$L(\sigma)\hookrightarrow\Delta(\sigma')\hookrightarrow T(\sigma')$.
We will consider the images of the injective homomorphisms
\begin{equation}\label{inject}
  L(\sigma) \otimes L (\tau) \hookrightarrow \Delta(\sigma')\otimes
  \Delta (\tau') \hookrightarrow T(\sigma')\otimes T (\tau') .
\end{equation} 

The tensor product $T(\sigma')\otimes T (\tau')$ has a
$\Delta$-filtration and decomposes as a direct sum of tilting modules.
We can bound the highest weights of the $\Delta$-modules in such a
filtration as illustrated in Figure \ref{bear}.
In the $0\le a+b \le p-3$ case,
\begin{align*}
  [T(\sigma')] &= [\Delta(p+a,0)] +[ \Delta(p-2-a,a+1)] \\
  [T(\tau')] &= [\Delta(p+b,0)] +[ \Delta(p-2-b,b+1)]
\end{align*}
and so the highest weights that appear are bounded by the
${\alpha_2}$-string through $(p+a,0)+(p+b,0)=(2p+a+b,0) \in C_6$.  The
other case, $p-3< a+b \le 2(p-3)$, is similar.

\begin{figure}[h]
\begin{center}
\begin{tikzpicture}[scale=1]
   \path (0,0) coordinate (origin);
  \path (60:1cm) coordinate (A1);
  \path (60:2cm) coordinate (A2);
  \path (60:3cm) coordinate (A3);
   \path (120:1cm) coordinate (B1);
  \path (120:2cm) coordinate (B2);
  \path (120:3cm) coordinate (B3);
   \path (A1) ++(120:2cm) coordinate (C1);
  \path (A2) ++(120:1cm) coordinate (C2);
   \draw[thick] (origin) -- (A3) (origin) -- (B3) (A3) -- (B3)
        (A1) -- (C1) (A2) -- (C2) 
        (B1) -- (C2) (B2) -- (C1)  
        (B1) -- (A1) (B2) -- (A2)  ;
          \path (origin) ++(120:0.0) coordinate (C3);
            \draw (origin) ++(0,0.6) node {\small$\mathbf{1}$};
             \draw (origin) ++(0,1.1) node {\small$\mathbf{2}$};
  \draw (A1) ++(0,0.6) node {\small$\mathbf{3}$};
  \draw (A1) ++(0,1.1) node {\small$\mathbf{4}$};
  \draw (B1) ++(0,0.6) node {\small$\mathbf{3}'$};
  \draw (B1) ++(0,1.1) node {\small$\mathbf{4}'$};
  \draw (A2) ++(0,0.6) node {\small$\mathbf{6}$};
   \draw (B2) ++(0,0.6) node {\small$\mathbf{6}'$};
    \draw (A1) ++(120:1cm) ++(0,0.6) node {\small$\mathbf{5}$};
  \path (C3) ++(60:2.6) coordinate (C2);
    \path (C2) ++(-30:0.3cm) coordinate (D2);
        \path (C2) ++(210:2.4cm) coordinate (E2);
  \draw[very thick,gray]  (C2) -- (E2); 
 \end{tikzpicture}\quad\begin{tikzpicture}[scale=1]
   \path (0,0) coordinate (origin);
  \path (60:1cm) coordinate (A1);
  \path (60:2cm) coordinate (A2);
  \path (60:3cm) coordinate (A3);
   \path (120:1cm) coordinate (B1);
  \path (120:2cm) coordinate (B2);
  \path (120:3cm) coordinate (B3);
    \draw (origin) ++(0,0.6) node {\small$\mathbf{1}$};
             \draw (origin) ++(0,1.1) node {\small$\mathbf{2}$};
  \draw (A1) ++(0,0.6) node {\small$\mathbf{3}$};
  \draw (A1) ++(0,1.1) node {\small$\mathbf{4}$};
  \draw (B1) ++(0,0.6) node {\small$\mathbf{3}'$};
  \draw (B1) ++(0,1.1) node {\small$\mathbf{4}'$};
  \draw (A2) ++(0,0.6) node {\small$\mathbf{6}$};
   \draw (B2) ++(0,0.6) node {\small$\mathbf{6}'$};
       \draw (A1) ++(120:1cm) ++(0,0.6) node {\small$\mathbf{5}$};
   \path (A1) ++(120:2cm) coordinate (C1);
  \path (A2) ++(120:1cm) coordinate (C2);
   \draw[thick] (origin) -- (A3) (origin) -- (B3) (A3) -- (B3)
        (A1) -- (C1) (A2) -- (C2) 
        (B1) -- (C2) (B2) -- (C1)  
        (B1) -- (A1) (B2) -- (A2)  ;
          \path (origin) ++(60:0.0) coordinate (C3);
   \path (C3) ++(120:2.6) coordinate (C2);
    \path (C2) ++(210:0.3cm) coordinate (D2);
        \path (C2) ++(-30:2.4cm) coordinate (E2);
  \draw[very thick,gray]  (C2) -- (E2); 
 \end{tikzpicture}
\end{center}
\caption{A typical example of the highest weights of $\Delta$-modules
  in a filtration of $T(\sigma')\otimes T(\tau')$ for $0\le a+b\le
  p-3$ and $p-3<a+b\le 2(p-3)$ respectively.  The lines cutting across
  the alcoves are the ${\alpha_j}$-strings through $(2p+a+b)\varpi_i$
  which bound the weights above (for $i\neq j$).  }
 \label{bear}
\end{figure}
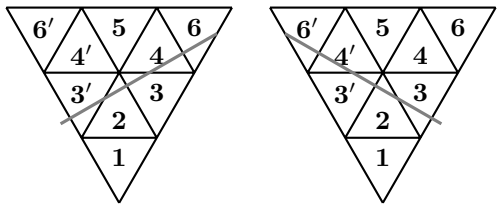

Having bounded the character of the tensor product, we may now
conclude that $T(\sigma')\otimes T(\tau')$ is a direct sum of tilting
modules of highest weights in $C_1,C_2,C_3,C_3',C_4,C_6,
\mathcal{F}_{2|3'}$, $\mathcal{F}_{2|3}, \mathcal{F}_{3|4},$
$\mathcal{F}_{4|6}$ when $0\le a+b \le p-3$ (and those obtained by
symmetry in the case $p-3<a+b \le 2(p-3)$).

By (\ref{inject}), $L(\sigma) \otimes L(\tau)$ appears as a submodule
of such a tilting module.  The simple modules are themselves
contravariantly self-dual and therefore the tensor product $L(\sigma)
\otimes L(\tau)$ is also contravariantly self-dual.  Finally, the
weights, $\lambda$, in $L(\sigma) \otimes L(\tau)$ satisfy the
inequality $\langle (\alpha_1+\alpha_2)^\vee , \lambda+\rho\rangle \le
2p-2$; therefore the simple modules in the tensor product $L(\sigma)
\otimes L(\tau)$ have highest weights in $C_1,C_2,C_3,C_3',
\mathcal{F}_{2|3}, \mathcal{F}_{2|3'}, \mathcal{F}_{3|4}$ (or the set
obtained by symmetry).

We shall focus on the $0\le a+b\le p-3$ case, as the other case is
obtained by symmetry.

\subsection{} \label{table}
We now focus on the modules $\Delta( \sigma')\otimes \Delta(\tau')$,
in order to study $L( \sigma')\otimes L(\tau')$.  In the
representation ring, the decomposition of $\Delta( \sigma')\otimes
\Delta(\tau')$ is given by the Littlewood--Richardson rule as follows
\begin{equation} \label{jormjorm}
  [\Delta(p+a,0) \otimes \Delta(p+b,0)]= \sum_{0\le j \le p+ b}
  [\Delta(2p+a+b-2j ,j)].
\end{equation} 
All these weights appear along the ${\alpha_2}$-string through
$(2p+a+b,0)$, as illustrated in Figure \ref{bear}.
The projection of $\Delta( \sigma')\otimes \Delta(\tau')$ onto any
linkage class has a $\Delta$-filtration.  When we project onto a
linkage class there are six distinct cases which can occur.  These are
summarised in the table below.  We label each case by the highest
weight in the linkage class.
\[
\begin{array}{c|l|l}
\text{character } &   \text{ highest  weight  } &\text{ condition}	\\
\hline
{[\Delta(2|3)]} &(p-1,\tfrac{1}{2}(p+1+a+b)) &a+b \text{ is even} \\
    {[\Delta(4|6)]} 		&(2p-1, \frac{1}{2}(a+b+1) ) &a+b \text{ is odd}				\\
    {[\Delta(3|4)]} +     {[\Delta(2|3')]} & (2p-4-a-b,2+a+b)      &\text{ for all } a,b    \\
    {[\Delta(3)]} +     {[\Delta(2)]}  &(2p+a+b-2j,j) &  \frac{1}{2}(p  +  1  +  a  +  b)\le j<2 + a + b \\
   \quad     {[\Delta(6)} +     {[\Delta(4)]} &(2p+a+b-2j,j) & a<2j < a+b+1 \\
        {[\Delta(6)}] +     {[\Delta(4)]} +     {[\Delta(3')]}   & (2p+a+b-2j,j) &2j\le a
 \end{array}
\]

\subsection{}\label{abc}
For each possible linkage class of $\Delta (\sigma') \otimes \Delta
(\tau')$ in the table in \ref{table}, we wish to calculate the image
\begin{equation}\label{importantembedding}
  L(\sigma)\otimes L(\tau) \hookrightarrow \Delta (\sigma') \otimes
  \Delta (\tau').
\end{equation} 
In this section, we shall deal with the first five cases.  We shall
see that in these five cases, all possible summands are tilting.  By
Section \ref{sec:Weyl}, the character of the image can easily be seen
to be given by
\begin{align*}
  [L(\sigma) \otimes L(\tau)] &= [\Delta(\sigma') \otimes
    \Delta(\tau')] - [L(\sigma')\otimes L(\tau)] - [L(\sigma)\otimes
    L(\tau')] \\ &= [\Delta(\sigma')\otimes \Delta(\tau')] -
  [L(1,0)^{[1]}\otimes L(a,0)\otimes L(\tau)] - [L(\sigma)\otimes
    L(1,0)^{[1]}\otimes L(b,0)].
\end{align*}
Note that, by the Steinberg tensor product theorem, none of the
subtracted terms have $p$-restricted composition factors.  Therefore,
$L(\sigma) \otimes L(\tau)$ is a submodule of $\Delta(\sigma') \otimes
\Delta(\tau')$ which
\begin{enumerate}[label={\rm(\alph*)},leftmargin=*] 
\item is contravariantly self-dual; \item has simple composition
  factors whose highest weights satisfy the inequality $$\langle
  (\alpha_1+\alpha_2)^\vee , \lambda+\rho\rangle \le 2p-2;$$ \item
  satisfies $[\Delta(\sigma')\otimes\Delta(\tau'): L(\nu)]=[L(\sigma)
    \otimes L(\tau):L(\nu)]$ for any $\nu \in X_1$.
\end{enumerate} 

We proceed case by case through the table in Section \ref{table}.  In
the first case described in the table, we see for $a+b$ even, that
$\Delta(2|3)=L(2|3)$ appears as a direct summand of
$\Delta(\sigma')\otimes \Delta(\tau')$.  By condition (c) this implies
that $L(2|3)$ appears as a direct summand of $L(\sigma) \otimes
L(\tau)$.
 
In the second case, $\Delta(4|6)=[L(4|6),L(1|2)]$ appears as a summand
of $\Delta(\sigma')\otimes \Delta(\tau')$.  By conditions (b) and (c)
this implies that $L(1|2)$ appears as a direct summand of $L(\sigma)
\otimes L(\tau)$.

In the third case, we see that $N$ appears as a summand of
$\Delta(\sigma')\otimes \Delta(\tau')$, where $N$ is an extension of
the form
\[
  0 \to \Delta(3|4) \to N \to \Delta(2|3') \to 0.
\]
The issue is whether or not this splits.  By (a) and (c), either the extension
is split and $L(2|3')\oplus L(2|3')$ is a direct summand of
$L(\sigma') \otimes L(\tau')$; or the sequence is the unique non-split
extension, $N\cong T(3|4)$, and $T(3|4)$ is a direct summand of
$L(\sigma') \otimes L(\tau')$.  In either case, the result is a sum of
tilting modules (note that $L(2|3')=T(2|3')$).

In the fourth case, we have a direct summand, $N$, of $\Delta(\sigma')
\otimes \Delta(\tau')$ where $N$ is an extension of the form
\[
  0 \to \Delta(3) \to N \to \Delta(2) \to 0.
\]
By Section \ref{sec:Weyl}, we know that $L(1)$ appears exactly once as
a composition factor of $N$ and that $\Delta(2)$ is a non-split
extension of $L(2)$ by $L(1)$, which, by (c) is preserved under the
embedding of (\ref{importantembedding}).  Therefore if $N$ is a split
extension, then there exists no submodule of $N$ satisfying properties
(a) and (c), which is a contradiction.  Hence $N$ is the unique
non-split extension and isomorphic to $T(3)$.  Now, notice that the
only submodule of $T(3)$ satisfying properties (a), (b) and (c) is
$T(3)$ itself.

In the fifth case, we have a direct summand, $N$, of $\Delta(\sigma')
\otimes \Delta(\tau')$ where $N$ is an extension of the form
\[
  0 \to \Delta(6) \to N \to \Delta(4) \to 0.
\]
Note that $L(2)$ appears with multiplicity one in $N$ and it extends a
$p$-restricted simple module.  One can now argue as above by noting
that if this extension is split we arrive at a contradiction.
Therefore, $N$ is the unique non-split extension. The only possible
contravariantly self-dual submodule of $N$ satisfying (c) is $T(2)$.

\subsection{}\label{jamjam} 
We now deal with the final case in the table in Section \ref{table}.
Our aim in this section is to show that in the final case, the summand
of $\Delta(\sigma') \otimes \Delta(\tau')$ is isomorphic to $\Delta(6)
\oplus N$ where $N$ is a non-split extension of the form
\[
  0 \to \Delta(4) \to N \to \Delta(3) \to 0,
\]
and that $L(1)\oplus M(2)$ is a direct summand of $L(\sigma) \otimes
L(\tau)$.  The character of the tensor product $T(\sigma')\otimes
T(\tau')$ is given by
\begin{equation} \label{tiltingcharacter}
  [T(\sigma')\otimes T(\tau')] = [\Delta(\sigma')\otimes
    \Delta(\tau')] + [\Delta(\sigma)\otimes \Delta(\tau')] \\ +
  [\Delta(\sigma')\otimes \Delta(\tau)] + [\Delta(\sigma)\otimes
    \Delta(\tau)].
\end{equation}

  We shall focus on linkage classes appearing along the $\alpha_2$-root string through $(2p+a+b,0)$.
Calculation of any of the tensor product decompositions along this
${\alpha_{2}}$-root string is easily done using the
Littlewood--Richardson rule (it is identical to the $\SL_2$ case).
Let $(c_1,c_2)= (a_1+b_1-2i,a_2+b_2+i)$ for $i\le \tfrac{1}{2}
(a_1+b_1)$.  Then
\begin{align*}
  [ \Delta(a_1,a_2) \otimes \Delta( b_1,b_2 ) : \Delta (c_1, c_2) ] =
  \begin{cases}
   1 & \text{for $i\le \min\{a_1,b_1\}$}, \\ 0 & \text{otherwise}.
  \end{cases}
\end{align*}  
Applying this to the four terms in the right-hand side of equation
(\ref{tiltingcharacter}) 
and projecting onto the  linkage class   with highest
weight $(2p+a+b-2j,j), \text{ for }2j\le a$, we get
$[T(\sigma')\otimes T(\tau'): \Delta(6)] = 1$, $[T(\sigma')\otimes
  T(\tau'): \Delta(4)] = 3$ and $[T(\sigma')\otimes T(\tau'):
  \Delta(3')] = 2$.


Highest weight theory tells us that the  linkage class   of $T(\sigma')\otimes
T(\tau')$ in which we are interested therefore has $T(6)\oplus 2T(4)$
as a direct summand (using the method highlighted in Section
\ref{dddd}).  From the characters of $T(6)$ and $T(4)$, we deduce that
each of the two copies of $\Delta(3')$ which occur in a Weyl
filtration of $T(\sigma')\otimes T(\tau')$ must occur in one of the
summands isomorphic to $T(4)$. Therefore $T(3')$ is not a direct
summand of the  linkage class  .

We now turn our attention to the submodule $\Delta(\sigma')\otimes
\Delta(\tau')\hookrightarrow T(\sigma')\otimes T(\tau')$.  The
character of the projection of $\Delta(\sigma')\otimes \Delta(\tau')$
onto the  linkage class   in which we are interested is
$[\Delta(6)]+[\Delta(4)]+[\Delta (3')]$.  The corresponding module
appears as a submodule of $T(\sigma')\otimes T(\tau')$ and so is
isomorphic to $\Delta(6)\oplus N$, where $N$ is the unique non-split
extension
\[
  0 \to \Delta(4) \to N \to \Delta(3') \to 0,
\]
as we have shown that any $\Delta(3')$ must appear in a $T(4)$.  

Finally, $L(\sigma)\otimes L(\tau)$ has character
\begin{align*}
  [L(\alpha)\otimes L(\beta)] =& [\Delta(p+a,0)\otimes\Delta(p+b,0)]-
  [L(p+a,0)\otimes L(p+b,0)] \\ &- [L(p+a,0)\otimes L(p-2-b,1+b)] -
  [L(p+b,0) \otimes L(p-2-a,1+a)].
\end{align*}
By the Steinberg tensor product theorem, neither of the latter two
terms contains an $L(3)$ or an $L(3')$.  Considering the second term,
we have that
\begin{align*}
L(p+a,0) \otimes L(p+b,0) &\cong (L(1,0) \otimes L(1,0))^{[1]} \otimes
(L(a,0)\otimes L(b,0))\\ &\cong (L(2,0) \oplus L(0,1))^{[1]} \otimes
(L(a,0)\otimes L(b,0)).
\end{align*}
By the Steinberg tensor product theorem, $L(2,0)^{[1]} \otimes
(L(a,0)\otimes L(b,0))$ does not contain an $L(3)$ or an $L(3')$.
However, $L(3')$ does appear in $L(0,1)^{[1]} \otimes (L(a,0)\otimes
L(b,0))$ with multiplicity equal to 1.


To summarise, we now know that $L(\sigma)\otimes L(\tau)$ satisfies
properties (a), (b) and (c) of Section \ref{abc} and that
$[L(\sigma)\otimes L(\tau): L(3')]=[\Delta(\sigma')\otimes
  \Delta(\tau'): L(3')]-1$ and $[L(\sigma)\otimes L(\tau):
  L(3)]=[\Delta(\sigma')\otimes \Delta(\tau'): L(3)]$.  There is a
unique possible submodule which obeys all these properties, given as
follows
\[
  L(1)\oplus M(2) \hookrightarrow \Delta(6) \oplus N \hookrightarrow
  T(6)\oplus T(4).
\]
This follows from the fact that $M(2)$ is the only contravariantly
self-dual submodule of $T(4)$ with the correct character (noting that
$L(1)$ must occur as a direct summand as it is a submodule of $T(6)$).

\subsection{}\label{844} 
By the above, the projection of $L(\sigma)\otimes L(\tau) $ onto the
 linkage class   containing the highest weight $(2p+a+b-2j,j)$, for $2j\le a$,
$a+b\le p-3$ is isomorphic to $M(2)\oplus L(1)$.

For $0\le 2c \le p-3$, consider the tensor product $L(\lfloor c/2
\rfloor, 0)\otimes L(\lceil c/2\rceil,0)$.  All weights in $C_6$ on
the ${\alpha_2}$-string through $(2p+c,0) $ are of the form
$(2p+c-2j,j)$ for $0\le 2j\le \lfloor c/2 \rfloor $, and so they all
label summands of $L(\lfloor c/2 \rfloor, 0)\otimes L(\lceil
c/2\rceil,0)$ isomorphic to $M(2)\oplus L(1)$ (the symmetric version
also holds).  Letting $c$ range over $0\le 2c \le p-3$ (respectively
$(p-3)\le 2c \le 2(p-3)$) we get that all weights in region $B$
(respectively, region $A$) of $C_6$ (respectively $C_6'$) in
Figure \ref{hoop} are of the form $(2p+c-2j,j)$ for some $0\le 2j\le
\lfloor c/2 \rfloor$.

Figure \ref{hoop} illustrates that any weight in $C_2$ is linked to
such a weight; more precisely, weights in region $A'$ are linked to
those in region $A$ and similarly for regions $B$ and $B'$.  Therefore
all $M(\lambda)$ for $\lambda \in C_2$ appear as direct summands of a
minimal tensor product.

\begin{figure}[h]
\begin{center}
\begin{tikzpicture}[scale=1]
   \path (0,0) coordinate (origin);
  \path (60:1cm) coordinate (A1);
  \path (60:2cm) coordinate (A2);
  \path (60:3cm) coordinate (A3);
   \path (120:1cm) coordinate (B1);
  \path (120:2cm) coordinate (B2);
  \path (120:3cm) coordinate (B3);
   \path (A1) ++(120:2cm) coordinate (C1);
  \path (A2) ++(120:1cm) coordinate (C2);
   \clip (-2,0) rectangle (A3);
   \foreach \i in {1,...,19}
  {
    \path (origin)++(60:0.2*\i cm)  coordinate (a\i);
    \path (origin)++(120:0.2*\i cm)  coordinate (b\i);
    \path (a\i)++(120:4cm) coordinate (ca\i);
    \path (b\i)++(60:4cm) coordinate (cb\i);
      }
   \draw[thick,gray]  (origin) -- (A3) (origin) -- (B3) (A3) -- (B3)
        (A1) -- (C1) (A2) -- (C2)  
        (B1) -- (C2) (B2) -- (C1)  
        (B1) -- (A1) (B2) -- (A2) ;
   \path (a15)   coordinate (P);
   \path (a10)++(120:0.5cm) coordinate (QB);
   \path (a10)coordinate (Q1B);
\fill[color=cyan] (P) -- (Q1B) -- (QB)  ;
 \path (b15)   coordinate (P1);
   \path (b10)++(60:0.5cm) coordinate (QB1);
   \path (b10)coordinate (Q1B1);
\fill[color=yellow] (P1) -- (Q1B1)  --  (QB1)    ;
   \path (120:0.5cm) coordinate (BOBBY);   \path  (BOBBY) ++(60:0.5cm) coordinate (BOBBY1);
 \path (b5)   coordinate (redbob1);
 \path (a5)   coordinate (bluebob1);
  \path (b5)  ++(60:1cm)  coordinate (bobtop);
\fill[color=cyan] (BOBBY1)-- (redbob1) --(bobtop)  ;
\fill[color=yellow]  (BOBBY1)--(bluebob1) --(bobtop) ;
 \node at (116:2.35cm) {\tiny $A$} ;
 \node at (64:2.35cm) {\tiny $B$} ;
 \node at (99:1.1cm) {\tiny $B'$} ;
 \node at (81:1.1cm) {\tiny $A'$} ;
     \end{tikzpicture}
\end{center}
\caption{Regions $A$ and $B$ contain weights of the form $(2p+c-2j,j)$
  and $(j,2p+c-2j)$, respectively, for $0\le 2j\le \lfloor c/2 \rfloor
  $.  They are linked to regions $A'$ and $B'$ respectively.}
\label{hoop}
\end{figure}
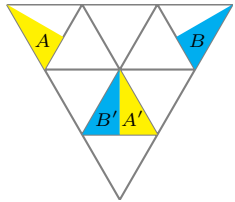
 
\subsection{}  \label{MtoM}
We have now shown that $L(\sigma) \otimes L(\tau)$ is a direct sum of
tilting modules and modules of the form $M(\nu)$, for $\nu\in C_2$.
By Section \ref{c2c2}, any tensor product of the form $L(\lambda)\otimes L(\mu)$ for
$\lambda,\mu\in C_2$ is a direct summand of $E^{\otimes r} \otimes
L(\sigma) \otimes L(\tau)$ such that $0\le r \le
2(p-3)$. 
 

Finally, it remains to show that $ E^{\otimes r} \otimes M(2) $ is a
direct sum of tilting modules and modules of the form $M(\nu)$ for
$\nu \in C_2$.  Any $p$-regular linkage class of $E \otimes M(\nu)$ is
of the form $M(\nu')$ for $\nu' \in C_2$, by translation.  It remains
to check that a $p$-singular direct summand of $E \otimes
M(\nu)$ is tilting.
Such a tensor product involves one or two $p$-singular linkage
classes: their highest weights are in $\mathcal{F}_{1|2}$,
$\mathcal{F}_{3|4}$ or $\mathcal{F}_{3'|4'}$.
A direct summand with highest weight in $\mathcal{F}_{1|2}$ is
immediately seen to be a simple tilting module.

It is easy to see that the characters of the other linkage components of  $E\otimes M(\nu)$ are
equal to the corresponding tilting characters. 
Let $\gamma=\nu+\overline{\varepsilon}_1$ be a weight in $\mathcal{F}_{3|4}$ or $\mathcal{F}_{3'|4'}$. 
 We have that
 $L(\gamma)\cong E^{[1]}\otimes L'$ where $L'$ has highest weight in alcove $C_1$. 
 Therefore $L(\gamma)	\otimes E^\ast \cong E^{[1]}\otimes (L'\otimes E^\ast)$ has no $p$-restricted composition factors.  The   head of $M(\nu)$ is $p$-restricted, therefore
 $$\Hom(M(\nu)\otimes E, L(\gamma))\cong
\Hom(M(\nu), L(\gamma)	\otimes E^\ast	)=0.$$
Therefore  $L(3|4)$  is not in the head of $M(\nu)\otimes E$.   
By the self-duality of $M(\nu)\otimes E$, we conclude that $L(3|4)$  is not in the socle of $M(\nu)\otimes E$.
It follows that the linkage component of $E\otimes M(\nu)$ is the uniserial tilting module $[L(2|3'),L(3|4),L(2|3')]$ or its symmetric cousin. 


\subsection{} \label{OHMYGOD}
We let $M'(2)$ denote any direct summand of the form $M(2)\oplus L(1)$
appearing in a minimal tensor product, $L(\sigma)\otimes L(\tau)$.  We
have seen in our case by case analysis, that a simple module $L(1)$
can appear as a summand of such a tensor product only if it appears as
a summand of some $M'(2)$.
 
We have seen in Section \ref{c2c2} 
    that $L(\lambda)\otimes L(\mu)$, for $\lambda,\mu \in C_2$,
appears as a direct summand of $E^{\otimes r} \otimes L(\sigma)
\otimes L(\tau)$ for $0\le r \le 2(p-3)$.  By Lemma \ref{Lemma4} a simple
module of the form $L(1)$ appears in the tensor product
$L(\lambda)\otimes L(\mu)$ as a direct summand if and only if it
appears as a direct summand of some $E^{\otimes r} \otimes M'(2)$.
  
Fixing some linkage class, recall that the characters of the modules
$M'(2)$, $T(3), T(3')$ and $T(2)$ in that linkage class, are of the
form
 \begin{align*}
[M'(2)] &= [L(3)] + [L(3')] + 2[L(2)] + 2[L(1)]	\\
[T(3)] &= [L(3)] + 2[L(2)] +[L(1)]	\\
[T(3')] &=  [L(3')] + 2[L(2)] + [L(1)]	\\
[ T(2)] &= [L(2)] + 2[L(1)]	,
 \end{align*}
where each module on the right-hand side is the unique simple module in
the given linkage class.  Note that these characters are linearly
independent as the transition matrix,
\[
\left(\begin{array}{cccc}
  1 & 1 & 0 & 0 \\1 & 0 & 1 & 0 \\2 & 2 & 2
  & 1 \\2 & 1 & 1 & 2
\end{array}\right),
\] 
is non-singular.  Therefore the decomposition of a tensor product
$L(\lambda)\otimes L(\mu)$ is uniquely determined by its character.
 
\subsection{Decomposition algorithm, II}\label{algorithm:2}
It follows from the above that we can calculate the multiplicities of
the indecomposable direct summands of $L(\lambda)\otimes L(\mu)$, for
the case $\lambda, \mu \in C_2$, as follows:
 
\begin{enumerate}[label=(\alph*)]
\item Express $[L(\lambda) \otimes L(\mu)]$ in terms of the
  $[\Delta(\nu)]$-basis. This can be done using three applications of
  the Littlewood--Richardson rule as follows
  \begin{equation*} 
  [L(\lambda)\otimes L(\mu)] = \sum_{\nu \in X^+} (
  c^\nu_{\lambda,\mu} - c^\nu_{ s_{1|2} \cdot \lambda,\mu}- c^\nu_{
    \lambda,s_{1|2}\cdot\mu} +c^\nu_{ s_{1|2}\cdot
    \lambda,s_{1|2}\cdot\mu} ) [\Delta(\nu)]
\end{equation*}
 \item 
  Express $[L(\lambda) \otimes L(\mu)]$ in terms of the
  $[L(\nu)]$-basis; i.e., compute the composition factor
  multiplicities in both filtrations and their difference. 
   This produces an expression of the form 
  \[
  [L(\lambda) \otimes L(\mu)] = \textstyle \sum_\nu d^{\lambda,\mu}_\nu [L(\nu)]
  \]
  in which each $d^{\lambda,\mu}_\nu \ge 0$. 

\item If $\nu \notin C_1 \cup C_2\cup C_3 \cup C_{3'}$ is maximal such that
  $d^{\lambda,\mu}_\nu >0$, then subtract $d^{\lambda,\mu}_\nu
  [T(\nu)]$. Repeat on the difference, until there do not exist any
$\nu \notin C_1 \cup C_2\cup C_3 \cup C_{3'}$ appearing in the expression.

\item At this point, only terms of the form $[L(\nu)]$ for $\nu \in
  C_1 \cup C_2\cup C_3 \cup C_{3'}$ will remain.  Let $\nu \in C_2$ be
  a representative of a linkage class in the above and consider the
  projection onto that linkage class.  Then we are dealing with an
  expression of the form $$ b_\nu[L(\nu)] +b_{s_{1|2}
    \cdot\nu}[L({s_{1|2} \cdot\nu})]+b_{s_{2|3} \cdot\nu}[L({s_{2|3}
      \cdot\nu})]+ b_{s_{2|3'} \cdot\nu}[L({s_{2|3'} \cdot\nu})]. $$
  Therefore the multiplicities of $M'(\nu), T(s_{2|3} \cdot\nu),
  T(s_{2|3'} \cdot\nu)$ and $T(\nu)$ in the tensor product are given
  by the matrix product
  \[
     \left(\begin{array}{cccc}3/4 & 3/4 & -1/2 & 1/4 \\1/4 & -3/4 &
       1/2 & -1/2 \\ -3/4 & 1/4 & 1/2 & -1/4 \\ -1/2 & -1/2 & 0 &
       1/2\end{array}\right) \left(\begin{array}{c}b_{s_{2|3} \cdot\nu
         } \\ b_{s_{2|3'} \cdot\nu } \\ b_{ \nu } \\b_{s_{1|2}
           \cdot\nu }\end{array}\right).
 \]
 The $4 \times 4$ matrix in the above product is obtained by inverting
 the transition matrix above. The resulting multiplicities must be
 non-negative integers.
\end{enumerate}

\begin{eg} 
Suppose the characteristic is $p=5$.  Consider the tensor product
$L(3,1 )\otimes L(3,1)$.  The Littlewood--Richardson rule gives the
character
\[
  [L( 6,2 )] + 2[L( 2,4 )] + [L( 4,3 )] + [L( 7,0 )]+[L (0,5)] + 2[L
    (1,3 )] + 2[L (0,2)].
\]
The $p$-singular characters $[L( 4,3 )]$ and $[L( 6,2 )] + 2[L ( 2,4
  )] $ are both tilting.  This leaves us with a linkage class
component with character $[L( 7,0 )]+[L(0,5 )] + 2[L (1,3 )] + 2[L(0,2
  )]$.  This is the character of $M'(1,3)$.  Therefore $L(3,1 )\otimes
L(3,1) = M(1,3) \oplus T( 0,2) \oplus T(6, 2 ) \oplus T(4,3)$.
\end{eg}

\end{document}